\theoremstyle{plain}
\newtheorem{theorem}{Theorem}[section]
\theoremstyle{definition}
\newtheorem{claim}[theorem]{Claim}
\newtheorem{question}[theorem]{Question}
\newtheorem{conjecture}[theorem]{Conjecture}
\newcommand{\bbN}{\mathbb{N}}
\newcommand{\bbP}{\mathbb{P}}
\newcommand{\bbE}{\mathbb{E}}
\newcommand{\calM}{\mathcal{M}}
\newcommand{\aps}[1]{\vert #1 \vert}
\newcommand{\OBL}[1]{\left( #1 \right)}
\newcommand{\FJADEF}[3]{{#1}:{#2}\to{#3}}
\numberwithin{equation}{section}
\definecolor{Maroon}{RGB}{140,10,0}
\title{Packing density of combinatorial settlement planning models}
\author[M.\ Puljiz]{Mate\ Puljiz}
\address[Mate Puljiz]{Department of Applied Mathematics\\
	Faculty of Electrical Engineering and Computing\\
	University of Zagreb\\ 
 Zagreb\\ 
	Croatia}
\email{mate.puljiz@fer.hr}
\author[S.\ \v{S}ebek]{Stjepan\ \v{S}ebek}
\address[Stjepan\ \v{S}ebek]{Department of Applied Mathematics\\
	Faculty of Electrical Engineering and Computing\\
	University of Zagreb\\ 
 Zagreb\\ 
	Croatia}
\email{stjepan.sebek@fer.hr}
\author[J.\ \v{Z}ubrini\'{c}]{Josip\ \v{Z}ubrini\'{c}}
\address[Josip\ \v{Z}ubrini\'{c}]{Department of Applied Mathematics\\
	Faculty of Electrical Engineering and Computing\\
	University of Zagreb\\ 
 Zagreb\\ 
	Croatia}
\email{josip.zubrinic@fer.hr}
\subjclass[2020]{60C05, 
90C27} 
\keywords{simulations, packing density}
\begin{document}
\allowdisplaybreaks[4]

\begin{abstract}
    Recently, a combinatorial settlement planning model was introduced in \cite{PSZ-21}. The idea underlying the model is that the houses are randomly being built on a rectangular tract of land according to the specified rule until the maximal configuration is reached, that is, no further houses can be built while still following that rule. Once the building of the settlement is done, the main question is what percentage of the tract of land on which the settlement was built has been used, i.e.\ what is the building density of the maximal configuration that was reached. In this article, with the aid of simulations, we find an estimate for the average building density of maximal configurations and we study what happens with this average when the size of a tract of land grows to infinity.
\end{abstract}

\maketitle

%
%
%
%

\section{Introduction}

The following problem was recently introduced in \cite{PSZ-21}: a rectangular $m\times n$ tract of land, whose sides are oriented north-south and east-west as in Figure \ref{fig:tract_of_land}, consists of $mn$ square lots of size $1 \times 1$. Each $1 \times 1$ square lot can be either empty, or occupied by a single house. A house is said to be \emph{blocked from sunlight} if the three lots immediately to its east, west and south are all occupied (it is assumed that sunlight always comes from the south\footnote{Our Southern Hemisphere friends are welcome to turn the page upside down when inspecting the figures in our paper.}). Along the boundary of the rectangular $m \times n$ grid, there are no obstructions to sunlight. We refer to the models of such rectangular tracts of land, with certain lots occupied, as configurations. Of interest are maximal configurations, where no house is blocked from the sunlight, and any further addition of a house to the configuration on any empty lot would result in either that house being blocked from the sunlight, or it would cut off sunlight from some previously built house, or both.

\begin{figure}[h]
	\centering
	\begin{tikzpicture}[scale = 0.5]
	\draw[step=1cm,black,very thin] (0, 0) grid (7,5);
	\draw [->,>=stealth] (9,1) -- (9,4);
	\node[anchor=west] at (9,4) {North};
	\end{tikzpicture}
	\caption{An example of a tract of land ($m = 5$, $n = 7$).}
	\label{fig:tract_of_land}
\end{figure}
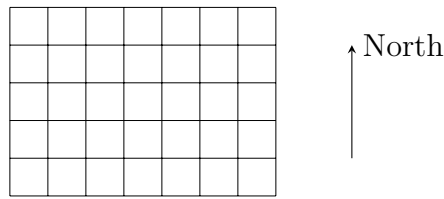

\medskip

We can encode any fixed configuration as a 0-1, $m\times n$ matrix $C$, with $C_{i,j}=1$ if and only if a house is built on the lot $(i,j)$ ($i$-th row and $j$-th column, counted from the top left corner). We can, equivalently, think of $C$ as a subset of $[m]\times[n] = \{(i,j) : 1\le i \le m,\, 1\le j\le n\}$, where, again, $(i,j)\in C$ if and only if a house is built on the lot $(i,j)$.

It is natural to define the \emph{building density} of a configuration $C$ as $\dfrac{|C|}{mn}$, where 
$$|C| = \sum_{i=1}^m\sum_{j=1}^n C_{i,j}$$
is the total number of occupied lots in the configuration $C$, i.e.\ the cardinality of $C$ when $C$ is interpreted as a subset of $[m]\times [n]$. We also call $|C|$ the \emph{occupancy} of $C$.

A configuration $C$ is said to be \emph{permissible} if no house in it is blocked from the sunlight, otherwise it is called \emph{impermissible}.

A configuration $C$ is said to be \emph{maximal} if it is permissible and no other permissible configuration strictly contains it, i.e.\ no further houses can be added to it, whilst ensuring that all the houses still get some sunlight. See Figure \ref{fig:examples} for examples of impermissible, permissible and maximal configuration on a $5 \times 4$ tract of land. Shaded squares represent houses and unshaded squares represent empty lots on the tract of land. The houses that are blocked from the sunlight are marked with the letter x.

\begin{figure}
	\begin{subfigure}{0.3\textwidth}\centering
		\begin{tikzpicture}[scale = 0.5]
		\draw[step=1cm,black,very thin] (0, 0) grid (4,5);
		\fill[blue!40!white] (0,1) rectangle (1,2);
		\fill[blue!40!white] (0,3) rectangle (1,4);
		\fill[blue!40!white] (0,4) rectangle (1,5);
		\fill[blue!40!white] (1,1) rectangle (2,2);
		\fill[blue!40!white] (1,2) rectangle (2,3);
		\fill[blue!40!white] (1,3) rectangle (2,4);
		\node[] at (1.5,3.5) {x};
		\fill[blue!40!white] (2,0) rectangle (3,1);
		\fill[blue!40!white] (2,1) rectangle (3,2);
		\fill[blue!40!white] (2,2) rectangle (3,3);
		\node[] at (2.5,2.5) {x};
		\fill[blue!40!white] (2,3) rectangle (3,4);
		\fill[blue!40!white] (3,0) rectangle (4,1);
		\fill[blue!40!white] (3,2) rectangle (4,3);
		\draw[step=1cm,black,very thin] (0, 0) grid (4,5);
		\end{tikzpicture}
		\caption{Impermissible}
	\end{subfigure}
	\begin{subfigure}{0.3\textwidth}\centering
		\begin{tikzpicture}[scale = 0.5]
		\draw[step=1cm,black,very thin] (0, 0) grid (4,5);
		\fill[blue!40!white] (0,1) rectangle (1,2);
		\fill[blue!40!white] (0,3) rectangle (1,4);
		\fill[blue!40!white] (0,4) rectangle (1,5);
		\fill[blue!40!white] (1,1) rectangle (2,2);
		\fill[blue!40!white] (1,3) rectangle (2,4);
		\fill[blue!40!white] (2,0) rectangle (3,1);
		\fill[blue!40!white] (2,1) rectangle (3,2);
		\fill[blue!40!white] (2,2) rectangle (3,3);
		\fill[blue!40!white] (2,3) rectangle (3,4);
		\fill[blue!40!white] (3,0) rectangle (4,1);
		\fill[blue!40!white] (3,2) rectangle (4,3);
		\draw[step=1cm,black,very thin] (0, 0) grid (4,5);
		\end{tikzpicture}
		\caption{Permissible}
	\end{subfigure}
	\begin{subfigure}{0.3\textwidth}\centering
		\begin{tikzpicture}[scale = 0.5]
		\draw[step=1cm,black,very thin] (0, 0) grid (4,5);
		\fill[blue!40!white] (0,0) rectangle (1,1);
		\fill[blue!40!white] (0,1) rectangle (1,2);
		\fill[blue!40!white] (0,2) rectangle (1,3);
		\fill[blue!40!white] (0,3) rectangle (1,4);
		\fill[blue!40!white] (0,4) rectangle (1,5);
		\fill[blue!40!white] (1,1) rectangle (2,2);
		\fill[blue!40!white] (1,3) rectangle (2,4);
		\fill[blue!40!white] (2,0) rectangle (3,1);
		\fill[blue!40!white] (2,1) rectangle (3,2);
		\fill[blue!40!white] (2,2) rectangle (3,3);
		\fill[blue!40!white] (2,3) rectangle (3,4);
		\fill[blue!40!white] (2,4) rectangle (3,5);
		\fill[blue!40!white] (3,0) rectangle (4,1);
		\fill[blue!40!white] (3,2) rectangle (4,3);
		\fill[blue!40!white] (3,4) rectangle (4,5);
		\draw[step=1cm,black,very thin] (0, 0) grid (4,5);
		\end{tikzpicture}
		\caption{Maximal}
	\end{subfigure}
	\caption{Examples of impermissible, permissible and maximal configuration on a $5 \times 4$ tract of land.}\label{fig:examples}
\end{figure}
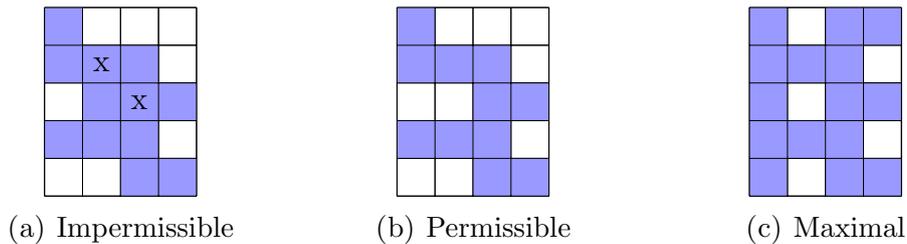

There are many natural questions related to maximal configurations. The authors in \cite{PSZ-21} were interested in those maximal configurations that achieve the highest and the lowest building density. We will focus our attention to the question of the average building density of a maximal configuration.

We were introduced to this problem by Juraj Bo\v{z}i\'{c} who came up with it during his studies at Faculty of Architecture, University of Zagreb. His main goal was to design a model for settlement planning where the impact of the architect would be as small as possible and people would have a lot of freedom in the process of building the settlement. This minimal intervention from the side of the architect is given through the condition that houses are not allowed to be blocked from the sunlight and that the tracts of land on which the settlements are built are of rectangular shapes.

The rest of the paper is organized as follows. In Section \ref{sec:modelingproblem} we formalize our stochastic models. In Section \ref{sec:dist_X^s} we study the probability distribution of the building density of a random maximal configuration. Finally, in Section \ref{sec:packingDensity} we consider the asymptotics when the dimensions of the tract of land increase to infinity.

\section{Modeling of a problem}\label{sec:modelingproblem}
Let us denote with $\calM_{m, n}$ the set of all maximal configurations on an $m \times n$ tract of land. We sample at random one maximal configuration $C$ from $\calM_{m, n}$. The main questions that we are interested in are:
\begin{itemize}
    \item What is the distribution of the random variable that measures the building density of the sampled configuration $C$ (i.e.\ what is the distribution of $(mn)^{-1} \cdot \aps{C}$)?
    \item How does the expectation of this random variable behave when we let $m$ or $n$ or both ($m$ and $n$) to infinity?
\end{itemize}

\medskip

Clearly, to be able to answer these questions, we need to specify precisely how we are sampling at random from the set $\calM_{m, n}$. There are (at least) two natural ways of doing this:

\medskip

\textit{Model (i): Sampling uniformly at random}

\noindent In this model, we assign equal probabilities to all the maximal configurations in $\calM_{m, n}$ and we sample one of them uniformly at random (see e.g.\ Figures \ref{fig:M_33}, \ref{fig:M_34} and \ref{fig:M_43} for the illustration how the set $\calM_{m, n}$ can look like). Notice that we need to know all the maximal configurations to be able to analyze this model and this turns out to be quite involved. As already mentioned, we are interested in the random variable measuring the building density of such a uniformly sampled maximal configuration. Denoting this random variable with $X^u_{m, n}$, we have 
    \begin{equation*}
        \bbP\OBL{X_{m, n}^u = \frac{k}{mn}} = \frac{\aps{\{C \in \calM_{m, n} : \aps{C} = k\}}}{\aps{\calM_{m, n}}}, \qquad k \in \bbN.
    \end{equation*}
    
\medskip
    
\textit{Model (ii): Sequential building of houses}

\noindent
In this model, we build houses one-by-one. Let $C_r$ denote the set of occupied lots after the $r$-th step of the algorithm. Let $B_r$ denote the set of lots where we tried to build a house (in the first $r$ steps), but were not able to, because this house, or some other, already present house, would then become blocked from the sunlight.

We start with $C_0 = B_0 = \emptyset$. In the first step, we sample uniformly at random one element $(i, j)$ from the set $[m] \times [n]$, we build a house on the corresponding lot and we set $C_1 = \{(i, j)\}$ and still keep $B_1 = \emptyset$. In the $r$-th step, we again sample uniformly at random one element $(i, j)$, but this time from the set $([m] \times [n]) \setminus (C_{r - 1} \cup B_{r - 1})$. If, after building a house on the sampled lot, our configuration stays permissible, we set $C_r = C_{r - 1} \cup \{(i, j)\}$ and $B_r = B_{r - 1}$. Otherwise, we set $C_r = C_{r - 1}$ and $B_r = B_{r - 1} \cup \{(i, j)\}$. In the end (after $mn$ steps), we have $C_{mn} = C$, for some maximal configuration $C \in \calM_{m, n}$, and $B_{mn} = C^c$.

Notice that sampling random lots in this fashion results in one random permutation of the elements of the set $[m] \times [n]$. Therefore, we can assign a unique maximal configuration to any permutation of the set $[m] \times [n]$. Clearly, many different permutations will give us the same maximal configuration, see Figure \ref{fig:act_of_G}. Let $S_X$ be the set of all permutations of the elements of the set $X$ (recall that $\aps{S_X} = \aps{X}!$). Denote by $\FJADEF{G}{S_{[m] \times [n]}}{\calM_{m, n}}$ the function that maps a permutation in $S_{[m] \times [n]}$ to the corresponding maximal configuration in $\calM_{m, n}$, as explained above. In this model of sequential building of houses, we do not sample maximal configurations uniformly at random, instead we sample permutations from $S_{[m] \times [n]}$ uniformly at random and then consider the corresponding maximal configurations. Hence, denoting the random variable that measures the building density of a maximal configuration (that we got by random sampling a permutation $\sigma \in S_{[m] \times [n]}$) by $X_{m, n}^s$, we have
    \begin{equation*}
        \bbP\OBL{X_{m, n}^s = \frac{k}{mn}} = \frac{\aps{\{\sigma \in S_{[m] \times [n]} : \aps{G(\sigma)} = k\}}}{(mn)!}, \qquad k \in \bbN.
    \end{equation*}

\begin{figure}
    \begin{center}
        \begin{tabular}{ccc}
        $((1, 1), (1, 2), (1, 3), (2, 1), (2, 2), (2, 3), (3, 1), (3, 2), (3, 3))$ & \multirow{3}{*}{$\stackrel{G}{\longmapsto}$} & \multirow{3}{*}{
        \begin{tikzpicture}[scale = 0.5]
        \draw[step=1cm,black,very thin] (0, 0) grid (3,3);
        \fill[blue!40!white] (0,0) rectangle (1,1);
        \fill[blue!40!white] (0,1) rectangle (1,2);
        \fill[blue!40!white] (0,2) rectangle (1,3);
        \fill[blue!40!white] (1,0) rectangle (2,1);
        \fill[blue!40!white] (1,2) rectangle (2,3);
        \fill[blue!40!white] (2,0) rectangle (3,1);
        \fill[blue!40!white] (2,1) rectangle (3,2);
        \fill[blue!40!white] (2,2) rectangle (3,3);
        \draw[step=1cm,black,very thin] (0, 0) grid (3,3);
    \end{tikzpicture}} \\
        $((3, 2), (1, 3), (1, 2), (3, 3), (2, 3), (1, 1), (2, 2), (3, 1), (2, 1))$ & & \\
        $((1, 3), (3, 2), (2, 1), (1, 1), (3, 1), (1, 2), (2, 3), (2, 2), (3, 3))$ & & \\ [15pt]
        $((2, 1), (2, 2), (2, 3), (3, 1), (3, 2), (3, 3), (1, 1), (1, 3), (1, 2))$ & \multirow{3}{*}{$\stackrel{G}{\longmapsto}$} & \multirow{3}{*}{
        \begin{tikzpicture}[scale = 0.5]
        \draw[step=1cm,black,very thin] (0, 0) grid (3,3);
        \fill[blue!40!white] (0,0) rectangle (1,1);
        \fill[blue!40!white] (0,1) rectangle (1,2);
        \fill[blue!40!white] (0,2) rectangle (1,3);
        \fill[blue!40!white] (1,1) rectangle (2,2);
        \fill[blue!40!white] (2,0) rectangle (3,1);
        \fill[blue!40!white] (2,1) rectangle (3,2);
        \fill[blue!40!white] (2,2) rectangle (3,3);
        \draw[step=1cm,black,very thin] (0, 0) grid (3,3);
    \end{tikzpicture}} \\
        $((1, 1), (2, 3), (2, 1), (1, 3), (2, 2), (1, 2), (3, 2), (3, 3), (3, 1))$ & & \\
        $((1, 3), (2, 3), (3, 3), (2, 2), (2, 1), (1, 1), (1, 2), (3, 1), (3, 2))$ & & \\ [10pt]
    \end{tabular}
    \caption{Acting of the function $G$ on some permutations in $S_{[3]\times[3]}$.}
    \label{fig:act_of_G}
    \end{center}
\end{figure}
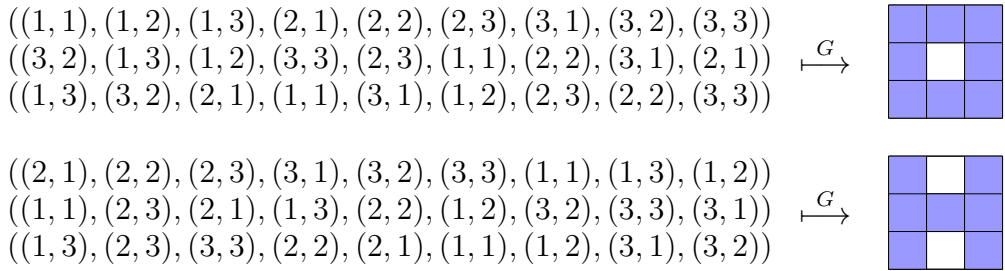

\medskip

A priori, it is not clear whether these two models are the same or not. It turns out that in general they are not the same. We illustrate this fact in the following claim.

\begin{claim}
    $X_{3, 3}^u \overset{(d)}{\neq} X_{3, 3}^s$.
\end{claim}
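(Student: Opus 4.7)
The plan is to exhibit a single atom of the distribution at which the two models cannot produce equal probabilities. The key structural observation is that on a $3\times 3$ grid the only cells that could ever be blocked from sunlight are $(1,2)$ and $(2,2)$, since these are the only positions with east, west, and south neighbors inside the grid. Consequently permissibility is controlled by just two local conditions.

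First, I would identify the unique maximal configuration of occupancy $8$. Any such configuration is the full grid minus a single cell. Removing any cell other than $(2,2)$ leaves $(1,2)$ and/or $(2,2)$ blocked, hence impermissible. Removing $(2,2)$, on the other hand, unblocks both, and $(2,2)$ cannot be re-added without blocking both $(2,2)$ and $(1,2)$. Therefore $C^* := ([3]\times[3])\setminus\{(2,2)\}$ is the unique maximal element of $\calM_{3,3}$ with $\aps{C^*}=8$, which yields
\[
\bbP\OBL{X_{3,3}^u = \tfrac{8}{9}} \;=\; \frac{1}{\aps{\calM_{3,3}}},
\]
a reciprocal of a positive integer.

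Next, I would compute $\bbP(G(\sigma)=C^*)$. The crucial fact is that every subset of $C^*$ is permissible: if a house $h$ were blocked inside some $S\subseteq C^*$, its east, west, and south neighbors would all lie in $S\subseteq C^*$, contradicting the permissibility of $C^*$. Hence when running the sequential procedure, every cell other than $(2,2)$ is automatically accepted whenever it is drawn, so $G(\sigma)=C^*$ is equivalent to $(2,2)$ being rejected at its sampling time. This happens precisely when the set $A$ of cells preceding $(2,2)$ in $\sigma$ satisfies
\[
\{(2,1),(2,3),(3,2)\}\subseteq A \quad \text{or} \quad \{(1,1),(1,2),(1,3)\}\subseteq A.
\]

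Finally, inclusion-exclusion on a uniformly random permutation gives the answer. The probability that $(2,2)$ is last among a prescribed set of $k$ cells containing $(2,2)$ is $1/k$; each of the two events above has probability $1/4$, while their intersection corresponds to $(2,2)$ being last among the seven specified cells, giving probability $1/7$. Therefore
\[
\bbP\OBL{X_{3,3}^s = \tfrac{8}{9}} \;=\; \tfrac{1}{4}+\tfrac{1}{4}-\tfrac{1}{7} \;=\; \tfrac{5}{14}.
\]
Since $5/14$ is not of the form $1/N$ for any positive integer $N$, the two atoms disagree and the claim follows. The only delicate step is the downward-closedness argument for $C^*$ with respect to permissibility, which collapses the preimage count for $G$ into the short inclusion-exclusion above.
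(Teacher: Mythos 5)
Your argument is correct, and it takes a genuinely different route from the paper's proof, which is pure brute force: the authors evaluate $G$ on all $362\,880$ permutations by computer, list all ten elements of $\calM_{3,3}$, and read off both distributions (giving $1/10$ versus $129\,600/362\,880 = 5/14$ for the atom at $8/9$). You replace the exhaustive search with two structural observations: (a) on a $3\times 3$ grid only $(1,2)$ and $(2,2)$ can ever be blocked, which pins down $C^*=([3]\times[3])\setminus\{(2,2)\}$ as the unique maximal configuration of occupancy $8$ and hence forces $\bbP\OBL{X^u_{3,3}=\tfrac{8}{9}}$ to be a unit fraction \emph{without ever computing} $\aps{\calM_{3,3}}$; and (b) permissibility is downward closed below a permissible configuration, so the sequential process never rejects a cell of $C^*$ before $(2,2)$ is processed, reducing $\bbP(G(\sigma)=C^*)$ to the relative position of $(2,2)$ within two triples and a two-term inclusion--exclusion yielding $5/14$ --- exactly matching the paper's count. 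What the paper's computation buys instead is the complete distribution (all ten configurations with their preimage sizes), which it reuses for the $3\times4$ and $4\times3$ cases; your proof is hand-checkable and explains \emph{why} the answer is $5/14$. One small wording issue: the sentence ``every cell other than $(2,2)$ is automatically accepted whenever it is drawn'' is literally true only up to the moment $(2,2)$ is processed (had $(2,2)$ been accepted, a later cell such as $(3,2)$ could be rejected); but that branch is irrelevant, since an accepted $(2,2)$ already forces $G(\sigma)\neq C^*$, so the equivalence you assert --- and with it the whole computation --- stands.
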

\begin{proof}
     Using the programming language R, we calculated the value of the function $G$ for all $(3\cdot 3)! = 362\,880$ permutations in $S_{[3]\times [3]}$. In this way we obtained all the maximal configurations in $\calM_{3, 3}$ (these are illustrated in Figure \ref{fig:M_33}) and, furthermore, we saw how many permutations are mapped to each of these maximal configurations. Even though there are $10$ maximal configurations in $\calM_{3, 3}$, it turns out that for each of the maximal configurations with $7$ occupied lots, there are only $25\,920$ permutations that the function $G$ maps to them, and there are $129\,600$ permutations that $G$ maps to the maximal configuration with $8$ occupied sites. Therefore
    \begin{equation*}
        X_{3, 3}^u \sim \begin{pmatrix}
            \frac{7}{9} & \frac{8}{9} \\[0.2cm]
            \frac{9}{10} & \frac{1}{10}
        \end{pmatrix}, \qquad X_{3, 3}^s \sim \begin{pmatrix}
            \frac{7}{9} & \frac{8}{9} \\[0.2cm]
            \frac{9}{14} & \frac{5}{14}
        \end{pmatrix}.
    \end{equation*}
\end{proof}


\begin{figure}
\begin{tabular}{ccccc}
    \begin{tikzpicture}[scale = 0.5]
        \draw[step=1cm,black,very thin] (0, 0) grid (3,3);
        \fill[blue!40!white] (0,0) rectangle (1,1);
        \fill[blue!40!white] (0,1) rectangle (1,2);
        \fill[blue!40!white] (0,2) rectangle (1,3);
        \fill[blue!40!white] (1,0) rectangle (2,1);
        \fill[blue!40!white] (1,2) rectangle (2,3);
        \fill[blue!40!white] (2,0) rectangle (3,1);
        \fill[blue!40!white] (2,1) rectangle (3,2);
        \fill[blue!40!white] (2,2) rectangle (3,3);
        \draw[step=1cm,black,very thin] (0, 0) grid (3,3);
    \end{tikzpicture}
    &
    \begin{tikzpicture}[scale = 0.5]
        \draw[step=1cm,black,very thin] (0, 0) grid (3,3);
        \fill[blue!40!white] (0,0) rectangle (1,1);
        \fill[blue!40!white] (1,0) rectangle (2,1);
        \fill[blue!40!white] (1,1) rectangle (2,2);
        \fill[blue!40!white] (1,2) rectangle (2,3);
        \fill[blue!40!white] (2,0) rectangle (3,1);
        \fill[blue!40!white] (2,1) rectangle (3,2);
        \fill[blue!40!white] (2,2) rectangle (3,3);
        \draw[step=1cm,black,very thin] (0, 0) grid (3,3);
    \end{tikzpicture}
    &
    \begin{tikzpicture}[scale = 0.5]
        \draw[step=1cm,black,very thin] (0, 0) grid (3,3);
        \fill[blue!40!white] (0,0) rectangle (1,1);
        \fill[blue!40!white] (0,1) rectangle (1,2);
        \fill[blue!40!white] (0,2) rectangle (1,3);
        \fill[blue!40!white] (1,0) rectangle (2,1);
        \fill[blue!40!white] (1,1) rectangle (2,2);
        \fill[blue!40!white] (1,2) rectangle (2,3);
        \fill[blue!40!white] (2,0) rectangle (3,1);
        \draw[step=1cm,black,very thin] (0, 0) grid (3,3);
    \end{tikzpicture}
    &
    \begin{tikzpicture}[scale = 0.5]
        \draw[step=1cm,black,very thin] (0, 0) grid (3,3);
        \fill[blue!40!white] (0,0) rectangle (1,1);
        \fill[blue!40!white] (0,2) rectangle (1,3);
        \fill[blue!40!white] (1,0) rectangle (2,1);
        \fill[blue!40!white] (1,1) rectangle (2,2);
        \fill[blue!40!white] (2,0) rectangle (3,1);
        \fill[blue!40!white] (2,1) rectangle (3,2);
        \fill[blue!40!white] (2,2) rectangle (3,3);
        \draw[step=1cm,black,very thin] (0, 0) grid (3,3);
    \end{tikzpicture}
    &
    \begin{tikzpicture}[scale = 0.5]
        \draw[step=1cm,black,very thin] (0, 0) grid (3,3);
        \fill[blue!40!white] (0,0) rectangle (1,1);
        \fill[blue!40!white] (0,1) rectangle (1,2);
        \fill[blue!40!white] (0,2) rectangle (1,3);
        \fill[blue!40!white] (1,0) rectangle (2,1);
        \fill[blue!40!white] (1,1) rectangle (2,2);
        \fill[blue!40!white] (2,0) rectangle (3,1);
        \fill[blue!40!white] (2,2) rectangle (3,3);
        \draw[step=1cm,black,very thin] (0, 0) grid (3,3);
    \end{tikzpicture} \\
(1) $129\,600$ & (2) $25\,920$ & (3) $25\,920$ & (4) $25\,920$ & (5) $25\,920$ \\[6pt]
    \begin{tikzpicture}[scale = 0.5]
        \draw[step=1cm,black,very thin] (0, 0) grid (3,3);
        \fill[blue!40!white] (0,0) rectangle (1,1);
        \fill[blue!40!white] (0,1) rectangle (1,2);
        \fill[blue!40!white] (0,2) rectangle (1,3);
        \fill[blue!40!white] (1,1) rectangle (2,2);
        \fill[blue!40!white] (2,0) rectangle (3,1);
        \fill[blue!40!white] (2,1) rectangle (3,2);
        \fill[blue!40!white] (2,2) rectangle (3,3);
        \draw[step=1cm,black,very thin] (0, 0) grid (3,3);
    \end{tikzpicture}
    &
    \begin{tikzpicture}[scale = 0.5]
        \draw[step=1cm,black,very thin] (0, 0) grid (3,3);
        \fill[blue!40!white] (0,0) rectangle (1,1);
        \fill[blue!40!white] (0,1) rectangle (1,2);
        \fill[blue!40!white] (1,1) rectangle (2,2);
        \fill[blue!40!white] (1,2) rectangle (2,3);
        \fill[blue!40!white] (2,0) rectangle (3,1);
        \fill[blue!40!white] (2,1) rectangle (3,2);
        \fill[blue!40!white] (2,2) rectangle (3,3);
        \draw[step=1cm,black,very thin] (0, 0) grid (3,3);
    \end{tikzpicture}
    &
    \begin{tikzpicture}[scale = 0.5]
        \draw[step=1cm,black,very thin] (0, 0) grid (3,3);
        \fill[blue!40!white] (0,0) rectangle (1,1);
        \fill[blue!40!white] (0,1) rectangle (1,2);
        \fill[blue!40!white] (0,2) rectangle (1,3);
        \fill[blue!40!white] (1,1) rectangle (2,2);
        \fill[blue!40!white] (1,2) rectangle (2,3);
        \fill[blue!40!white] (2,0) rectangle (3,1);
        \fill[blue!40!white] (2,1) rectangle (3,2);
        \draw[step=1cm,black,very thin] (0, 0) grid (3,3);
    \end{tikzpicture}
    &
    \begin{tikzpicture}[scale = 0.5]
        \draw[step=1cm,black,very thin] (0, 0) grid (3,3);
        \fill[blue!40!white] (0,0) rectangle (1,1);
        \fill[blue!40!white] (0,1) rectangle (1,2);
        \fill[blue!40!white] (1,0) rectangle (2,1);
        \fill[blue!40!white] (1,1) rectangle (2,2);
        \fill[blue!40!white] (1,2) rectangle (2,3);
        \fill[blue!40!white] (2,0) rectangle (3,1);
        \fill[blue!40!white] (2,2) rectangle (3,3);
        \draw[step=1cm,black,very thin] (0, 0) grid (3,3);
    \end{tikzpicture}
    &
    \begin{tikzpicture}[scale = 0.5]
        \draw[step=1cm,black,very thin] (0, 0) grid (3,3);
        \fill[blue!40!white] (0,0) rectangle (1,1);
        \fill[blue!40!white] (0,2) rectangle (1,3);
        \fill[blue!40!white] (1,0) rectangle (2,1);
        \fill[blue!40!white] (1,1) rectangle (2,2);
        \fill[blue!40!white] (1,2) rectangle (2,3);
        \fill[blue!40!white] (2,0) rectangle (3,1);
        \fill[blue!40!white] (2,1) rectangle (3,2);
        \draw[step=1cm,black,very thin] (0, 0) grid (3,3);
    \end{tikzpicture} \\
(6) $25\,920$ & (7) $25\,920$ & (8) $25\,920$ & (9) $25\,920$ & (10) $25\,920$
\end{tabular}
\caption{All the elements of $\calM_{3, 3}$ together with the number of permutations in $S_{[3]\times[3]}$ that are mapped to each by the function $G$.}\label{fig:M_33}
\end{figure}
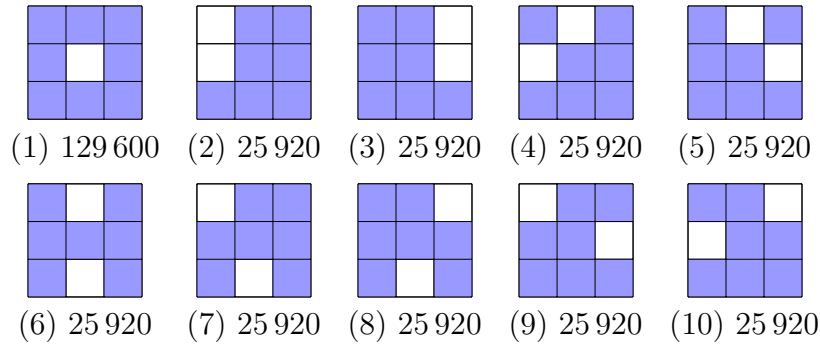


\begin{figure}
\begin{tabular}{ccccc}
    \begin{tikzpicture}[scale = 0.5]
        \draw[step=1cm,black,very thin] (0, 0) grid (4,3);
        \fill[blue!40!white] (0,0) rectangle (1,1);
        \fill[blue!40!white] (0,1) rectangle (1,2);
        \fill[blue!40!white] (1,0) rectangle (2,1);
        \fill[blue!40!white] (1,1) rectangle (2,2);
        \fill[blue!40!white] (1,2) rectangle (2,3);
        \fill[blue!40!white] (2,0) rectangle (3,1);
        \fill[blue!40!white] (2,2) rectangle (3,3);
        \fill[blue!40!white] (3,0) rectangle (4,1);
        \fill[blue!40!white] (3,1) rectangle (4,2);
        \fill[blue!40!white] (3,2) rectangle (4,3);
        \draw[step=1cm,black,very thin] (0, 0) grid (4,3);
    \end{tikzpicture}
    &
    \begin{tikzpicture}[scale = 0.5]
        \draw[step=1cm,black,very thin] (0, 0) grid (4,3);
        \fill[blue!40!white] (0,0) rectangle (1,1);
        \fill[blue!40!white] (0,1) rectangle (1,2);
        \fill[blue!40!white] (0,2) rectangle (1,3);
        \fill[blue!40!white] (1,0) rectangle (2,1);
        \fill[blue!40!white] (1,2) rectangle (2,3);
        \fill[blue!40!white] (2,0) rectangle (3,1);
        \fill[blue!40!white] (2,1) rectangle (3,2);
        \fill[blue!40!white] (2,2) rectangle (3,3);
        \fill[blue!40!white] (3,0) rectangle (4,1);
        \fill[blue!40!white] (3,1) rectangle (4,2);
        \draw[step=1cm,black,very thin] (0, 0) grid (4,3);
    \end{tikzpicture}
    &
    \begin{tikzpicture}[scale = 0.5]
        \draw[step=1cm,black,very thin] (0, 0) grid (4,3);
        \fill[blue!40!white] (0,0) rectangle (1,1);
        \fill[blue!40!white] (0,1) rectangle (1,2);
        \fill[blue!40!white] (0,2) rectangle (1,3);
        \fill[blue!40!white] (1,0) rectangle (2,1);
        \fill[blue!40!white] (2,0) rectangle (3,1);
        \fill[blue!40!white] (2,1) rectangle (3,2);
        \fill[blue!40!white] (2,2) rectangle (3,3);
        \fill[blue!40!white] (3,0) rectangle (4,1);
        \fill[blue!40!white] (3,1) rectangle (4,2);
        \fill[blue!40!white] (3,2) rectangle (4,3);
        \draw[step=1cm,black,very thin] (0, 0) grid (4,3);
    \end{tikzpicture}
    &
    \begin{tikzpicture}[scale = 0.5]
        \draw[step=1cm,black,very thin] (0, 0) grid (4,3);
        \fill[blue!40!white] (0,0) rectangle (1,1);
        \fill[blue!40!white] (0,1) rectangle (1,2);
        \fill[blue!40!white] (0,2) rectangle (1,3);
        \fill[blue!40!white] (1,0) rectangle (2,1);
        \fill[blue!40!white] (1,1) rectangle (2,2);
        \fill[blue!40!white] (1,2) rectangle (2,3);
        \fill[blue!40!white] (2,0) rectangle (3,1);
        \fill[blue!40!white] (3,0) rectangle (4,1);
        \fill[blue!40!white] (3,1) rectangle (4,2);
        \fill[blue!40!white] (3,2) rectangle (4,3);
        \draw[step=1cm,black,very thin] (0, 0) grid (4,3);
    \end{tikzpicture}
    &
    \begin{tikzpicture}[scale = 0.5]
        \draw[step=1cm,black,very thin] (0, 0) grid (4,3);
        \fill[blue!40!white] (0,0) rectangle (1,1);
        \fill[blue!40!white] (0,1) rectangle (1,2);
        \fill[blue!40!white] (0,2) rectangle (1,3);
        \fill[blue!40!white] (1,0) rectangle (2,1);
        \fill[blue!40!white] (1,1) rectangle (2,2);
        \fill[blue!40!white] (2,0) rectangle (3,1);
        \fill[blue!40!white] (2,2) rectangle (3,3);
        \fill[blue!40!white] (3,0) rectangle (4,1);
        \fill[blue!40!white] (3,1) rectangle (4,2);
        \fill[blue!40!white] (3,2) rectangle (4,3);
        \draw[step=1cm,black,very thin] (0, 0) grid (4,3);
    \end{tikzpicture}\\
(1) $56\,422\,080$ & (2) $56\,422\,080$ & (3) $46\,252\,800$ & (4) $46\,252\,800$ & (5) $46\,252\,800$ \\[6pt]
    \begin{tikzpicture}[scale = 0.5]
        \draw[step=1cm,black,very thin] (0, 0) grid (4,3);
        \fill[blue!40!white] (0,0) rectangle (1,1);
        \fill[blue!40!white] (0,1) rectangle (1,2);
        \fill[blue!40!white] (0,2) rectangle (1,3);
        \fill[blue!40!white] (1,0) rectangle (2,1);
        \fill[blue!40!white] (1,2) rectangle (2,3);
        \fill[blue!40!white] (2,0) rectangle (3,1);
        \fill[blue!40!white] (2,1) rectangle (3,2);
        \fill[blue!40!white] (3,0) rectangle (4,1);
        \fill[blue!40!white] (3,1) rectangle (4,2);
        \fill[blue!40!white] (3,2) rectangle (4,3);
        \draw[step=1cm,black,very thin] (0, 0) grid (4,3);
    \end{tikzpicture}
    &
    \begin{tikzpicture}[scale = 0.5]
        \draw[step=1cm,black,very thin] (0, 0) grid (4,3);
        \fill[blue!40!white] (0,0) rectangle (1,1);
        \fill[blue!40!white] (0,1) rectangle (1,2);
        \fill[blue!40!white] (0,2) rectangle (1,3);
        \fill[blue!40!white] (1,0) rectangle (2,1);
        \fill[blue!40!white] (1,2) rectangle (2,3);
        \fill[blue!40!white] (2,0) rectangle (3,1);
        \fill[blue!40!white] (2,2) rectangle (3,3);
        \fill[blue!40!white] (3,0) rectangle (4,1);
        \fill[blue!40!white] (3,1) rectangle (4,2);
        \fill[blue!40!white] (3,2) rectangle (4,3);
        \draw[step=1cm,black,very thin] (0, 0) grid (4,3);
    \end{tikzpicture}
    &
    \begin{tikzpicture}[scale = 0.5]
        \draw[step=1cm,black,very thin] (0, 0) grid (4,3);
        \fill[blue!40!white] (0,0) rectangle (1,1);
        \fill[blue!40!white] (0,1) rectangle (1,2);
        \fill[blue!40!white] (0,2) rectangle (1,3);
        \fill[blue!40!white] (1,1) rectangle (2,2);
        \fill[blue!40!white] (2,1) rectangle (3,2);
        \fill[blue!40!white] (2,2) rectangle (3,3);
        \fill[blue!40!white] (3,0) rectangle (4,1);
        \fill[blue!40!white] (3,1) rectangle (4,2);
        \fill[blue!40!white] (3,2) rectangle (4,3);
        \draw[step=1cm,black,very thin] (0, 0) grid (4,3);
    \end{tikzpicture}
    &
    \begin{tikzpicture}[scale = 0.5]
        \draw[step=1cm,black,very thin] (0, 0) grid (4,3);
        \fill[blue!40!white] (0,0) rectangle (1,1);
        \fill[blue!40!white] (0,1) rectangle (1,2);
        \fill[blue!40!white] (0,2) rectangle (1,3);
        \fill[blue!40!white] (1,1) rectangle (2,2);
        \fill[blue!40!white] (1,2) rectangle (2,3);
        \fill[blue!40!white] (2,1) rectangle (3,2);
        \fill[blue!40!white] (3,0) rectangle (4,1);
        \fill[blue!40!white] (3,1) rectangle (4,2);
        \fill[blue!40!white] (3,2) rectangle (4,3);
        \draw[step=1cm,black,very thin] (0, 0) grid (4,3);
    \end{tikzpicture}
    &
    \begin{tikzpicture}[scale = 0.5]
        \draw[step=1cm,black,very thin] (0, 0) grid (4,3);
        \fill[blue!40!white] (0,0) rectangle (1,1);
        \fill[blue!40!white] (0,2) rectangle (1,3);
        \fill[blue!40!white] (1,0) rectangle (2,1);
        \fill[blue!40!white] (1,1) rectangle (2,2);
        \fill[blue!40!white] (2,1) rectangle (3,2);
        \fill[blue!40!white] (2,2) rectangle (3,3);
        \fill[blue!40!white] (3,0) rectangle (4,1);
        \fill[blue!40!white] (3,1) rectangle (4,2);
        \fill[blue!40!white] (3,2) rectangle (4,3);
        \draw[step=1cm,black,very thin] (0, 0) grid (4,3);
    \end{tikzpicture} \\
(6) $46\,252\,800$ & (7) $39\,916\,800$ & (8) $15\,095\,520$ & (9) $15\,095\,520$ & (10) $15\,095\,520$ \\[6pt]
    \begin{tikzpicture}[scale = 0.5]
        \draw[step=1cm,black,very thin] (0, 0) grid (4,3);
        \fill[blue!40!white] (0,0) rectangle (1,1);
        \fill[blue!40!white] (0,1) rectangle (1,2);
        \fill[blue!40!white] (0,2) rectangle (1,3);
        \fill[blue!40!white] (1,1) rectangle (2,2);
        \fill[blue!40!white] (1,2) rectangle (2,3);
        \fill[blue!40!white] (2,0) rectangle (3,1);
        \fill[blue!40!white] (2,1) rectangle (3,2);
        \fill[blue!40!white] (3,0) rectangle (4,1);
        \fill[blue!40!white] (3,2) rectangle (4,3);
        \draw[step=1cm,black,very thin] (0, 0) grid (4,3);
    \end{tikzpicture}
    &
    \begin{tikzpicture}[scale = 0.5]
        \draw[step=1cm,black,very thin] (0, 0) grid (4,3);
        \fill[blue!40!white] (0,0) rectangle (1,1);
        \fill[blue!40!white] (0,1) rectangle (1,2);
        \fill[blue!40!white] (0,2) rectangle (1,3);
        \fill[blue!40!white] (1,1) rectangle (2,2);
        \fill[blue!40!white] (2,0) rectangle (3,1);
        \fill[blue!40!white] (2,1) rectangle (3,2);
        \fill[blue!40!white] (2,2) rectangle (3,3);
        \fill[blue!40!white] (3,0) rectangle (4,1);
        \fill[blue!40!white] (3,2) rectangle (4,3);
        \draw[step=1cm,black,very thin] (0, 0) grid (4,3);
    \end{tikzpicture}
    &
    \begin{tikzpicture}[scale = 0.5]
        \draw[step=1cm,black,very thin] (0, 0) grid (4,3);
        \fill[blue!40!white] (0,0) rectangle (1,1);
        \fill[blue!40!white] (0,2) rectangle (1,3);
        \fill[blue!40!white] (1,0) rectangle (2,1);
        \fill[blue!40!white] (1,1) rectangle (2,2);
        \fill[blue!40!white] (1,2) rectangle (2,3);
        \fill[blue!40!white] (2,1) rectangle (3,2);
        \fill[blue!40!white] (3,0) rectangle (4,1);
        \fill[blue!40!white] (3,1) rectangle (4,2);
        \fill[blue!40!white] (3,2) rectangle (4,3);
        \draw[step=1cm,black,very thin] (0, 0) grid (4,3);
    \end{tikzpicture}
    &
    \begin{tikzpicture}[scale = 0.5]
        \draw[step=1cm,black,very thin] (0, 0) grid (4,3);
        \fill[blue!40!white] (0,0) rectangle (1,1);
        \fill[blue!40!white] (0,2) rectangle (1,3);
        \fill[blue!40!white] (1,0) rectangle (2,1);
        \fill[blue!40!white] (1,1) rectangle (2,2);
        \fill[blue!40!white] (2,0) rectangle (3,1);
        \fill[blue!40!white] (2,1) rectangle (3,2);
        \fill[blue!40!white] (2,2) rectangle (3,3);
        \fill[blue!40!white] (3,0) rectangle (4,1);
        \fill[blue!40!white] (3,2) rectangle (4,3);
        \draw[step=1cm,black,very thin] (0, 0) grid (4,3);
    \end{tikzpicture}
    &
    \begin{tikzpicture}[scale = 0.5]
        \draw[step=1cm,black,very thin] (0, 0) grid (4,3);
        \fill[blue!40!white] (0,0) rectangle (1,1);
        \fill[blue!40!white] (0,2) rectangle (1,3);
        \fill[blue!40!white] (1,0) rectangle (2,1);
        \fill[blue!40!white] (1,1) rectangle (2,2);
        \fill[blue!40!white] (1,2) rectangle (2,3);
        \fill[blue!40!white] (2,0) rectangle (3,1);
        \fill[blue!40!white] (2,1) rectangle (3,2);
        \fill[blue!40!white] (3,0) rectangle (4,1);
        \fill[blue!40!white] (3,2) rectangle (4,3);
        \draw[step=1cm,black,very thin] (0, 0) grid (4,3);
    \end{tikzpicture} \\
(11) $15\,095\,520$ & (12) $15\,095\,520$ & (13) $15\,095\,520$ & (14) $15\,095\,520$ & (15) $15\,095\,520$ \\[6pt]
    \begin{tikzpicture}[scale = 0.5]
        \draw[step=1cm,black,very thin] (0, 0) grid (4,3);
        \fill[blue!40!white] (0,0) rectangle (1,1);
        \fill[blue!40!white] (0,1) rectangle (1,2);
        \fill[blue!40!white] (1,1) rectangle (2,2);
        \fill[blue!40!white] (1,2) rectangle (2,3);
        \fill[blue!40!white] (2,1) rectangle (3,2);
        \fill[blue!40!white] (2,2) rectangle (3,3);
        \fill[blue!40!white] (3,0) rectangle (4,1);
        \fill[blue!40!white] (3,1) rectangle (4,2);
        \draw[step=1cm,black,very thin] (0, 0) grid (4,3);
    \end{tikzpicture}
    &
    \begin{tikzpicture}[scale = 0.5]
        \draw[step=1cm,black,very thin] (0, 0) grid (4,3);
        \fill[blue!40!white] (0,0) rectangle (1,1);
        \fill[blue!40!white] (1,0) rectangle (2,1);
        \fill[blue!40!white] (1,1) rectangle (2,2);
        \fill[blue!40!white] (1,2) rectangle (2,3);
        \fill[blue!40!white] (2,0) rectangle (3,1);
        \fill[blue!40!white] (2,1) rectangle (3,2);
        \fill[blue!40!white] (2,2) rectangle (3,3);
        \fill[blue!40!white] (3,0) rectangle (4,1);
        \draw[step=1cm,black,very thin] (0, 0) grid (4,3);
    \end{tikzpicture}
    &
    \begin{tikzpicture}[scale = 0.5]
        \draw[step=1cm,black,very thin] (0, 0) grid (4,3);
        \fill[blue!40!white] (0,0) rectangle (1,1);
        \fill[blue!40!white] (1,0) rectangle (2,1);
        \fill[blue!40!white] (1,1) rectangle (2,2);
        \fill[blue!40!white] (1,2) rectangle (2,3);
        \fill[blue!40!white] (2,1) rectangle (3,2);
        \fill[blue!40!white] (2,2) rectangle (3,3);
        \fill[blue!40!white] (3,0) rectangle (4,1);
        \fill[blue!40!white] (3,1) rectangle (4,2);
        \draw[step=1cm,black,very thin] (0, 0) grid (4,3);
    \end{tikzpicture}
    &
    \begin{tikzpicture}[scale = 0.5]
        \draw[step=1cm,black,very thin] (0, 0) grid (4,3);
        \fill[blue!40!white] (0,0) rectangle (1,1);
        \fill[blue!40!white] (0,1) rectangle (1,2);
        \fill[blue!40!white] (1,1) rectangle (2,2);
        \fill[blue!40!white] (1,2) rectangle (2,3);
        \fill[blue!40!white] (2,0) rectangle (3,1);
        \fill[blue!40!white] (2,1) rectangle (3,2);
        \fill[blue!40!white] (2,2) rectangle (3,3);
        \fill[blue!40!white] (3,0) rectangle (4,1);
        \draw[step=1cm,black,very thin] (0, 0) grid (4,3);
    \end{tikzpicture}
    &
     \\
(16) $5\,116\,320$ & (17) $5\,116\,320$ & (18) $5\,116\,320$ & (19) $5\,116\,320$ &  
\end{tabular}
\caption{All the elements of $\calM_{3, 4}$ together with the number of permutations in $S_{[3]\times [4]}$ that are mapped to each by the function $G$.}\label{fig:M_34}
\end{figure}


\begin{figure}
\begin{tabular}{ccccc}
    \begin{tikzpicture}[scale = 0.5]
        \draw[step=1cm,black,very thin] (0, 0) grid (3,4);
        \fill[blue!40!white] (0,0) rectangle (1,1);
        \fill[blue!40!white] (0,1) rectangle (1,2);
        \fill[blue!40!white] (0,2) rectangle (1,3);
        \fill[blue!40!white] (0,3) rectangle (1,4);
        \fill[blue!40!white] (1,0) rectangle (2,1);
        \fill[blue!40!white] (1,2) rectangle (2,3);
        \fill[blue!40!white] (2,0) rectangle (3,1);
        \fill[blue!40!white] (2,1) rectangle (3,2);
        \fill[blue!40!white] (2,2) rectangle (3,3);
        \fill[blue!40!white] (2,3) rectangle (3,4);
        \draw[step=1cm,black,very thin] (0, 0) grid (3,4);
    \end{tikzpicture}
    &
    \begin{tikzpicture}[scale = 0.5]
        \draw[step=1cm,black,very thin] (0, 0) grid (3,4);
        \fill[blue!40!white] (0,0) rectangle (1,1);
        \fill[blue!40!white] (0,1) rectangle (1,2);
        \fill[blue!40!white] (0,2) rectangle (1,3);
        \fill[blue!40!white] (0,3) rectangle (1,4);
        \fill[blue!40!white] (1,1) rectangle (2,2);
        \fill[blue!40!white] (1,3) rectangle (2,4);
        \fill[blue!40!white] (2,0) rectangle (3,1);
        \fill[blue!40!white] (2,1) rectangle (3,2);
        \fill[blue!40!white] (2,2) rectangle (3,3);
        \fill[blue!40!white] (2,3) rectangle (3,4);
        \draw[step=1cm,black,very thin] (0, 0) grid (3,4);
    \end{tikzpicture}
    &
    \begin{tikzpicture}[scale = 0.5]
        \draw[step=1cm,black,very thin] (0, 0) grid (3,4);
        \fill[blue!40!white] (0,0) rectangle (1,1);
        \fill[blue!40!white] (0,2) rectangle (1,3);
        \fill[blue!40!white] (0,3) rectangle (1,4);
        \fill[blue!40!white] (1,0) rectangle (2,1);
        \fill[blue!40!white] (1,1) rectangle (2,2);
        \fill[blue!40!white] (1,3) rectangle (2,4);
        \fill[blue!40!white] (2,0) rectangle (3,1);
        \fill[blue!40!white] (2,1) rectangle (3,2);
        \fill[blue!40!white] (2,2) rectangle (3,3);
        \fill[blue!40!white] (2,3) rectangle (3,4);
        \draw[step=1cm,black,very thin] (0, 0) grid (3,4);
    \end{tikzpicture}
    &
    \begin{tikzpicture}[scale = 0.5]
        \draw[step=1cm,black,very thin] (0, 0) grid (3,4);
        \fill[blue!40!white] (0,0) rectangle (1,1);
        \fill[blue!40!white] (0,1) rectangle (1,2);
        \fill[blue!40!white] (0,2) rectangle (1,3);
        \fill[blue!40!white] (0,3) rectangle (1,4);
        \fill[blue!40!white] (1,0) rectangle (2,1);
        \fill[blue!40!white] (1,1) rectangle (2,2);
        \fill[blue!40!white] (1,3) rectangle (2,4);
        \fill[blue!40!white] (2,0) rectangle (3,1);
        \fill[blue!40!white] (2,2) rectangle (3,3);
        \fill[blue!40!white] (2,3) rectangle (3,4);
        \draw[step=1cm,black,very thin] (0, 0) grid (3,4);
    \end{tikzpicture}
    &
    \begin{tikzpicture}[scale = 0.5]
        \draw[step=1cm,black,very thin] (0, 0) grid (3,4);
        \fill[blue!40!white] (0,0) rectangle (1,1);
        \fill[blue!40!white] (0,1) rectangle (1,2);
        \fill[blue!40!white] (0,2) rectangle (1,3);
        \fill[blue!40!white] (1,0) rectangle (2,1);
        \fill[blue!40!white] (1,2) rectangle (2,3);
        \fill[blue!40!white] (1,3) rectangle (2,4);
        \fill[blue!40!white] (2,0) rectangle (3,1);
        \fill[blue!40!white] (2,1) rectangle (3,2);
        \fill[blue!40!white] (2,2) rectangle (3,3);
        \fill[blue!40!white] (2,3) rectangle (3,4);
        \draw[step=1cm,black,very thin] (0, 0) grid (3,4);
    \end{tikzpicture}\\
(1) $45\,334\,080$ & (2) $45\,334\,080$ & (3) $45\,334\,080$ & (4) $45\,334\,080$ & (5) $45\,334\,080$ \\[6pt]
    \begin{tikzpicture}[scale = 0.5]
        \draw[step=1cm,black,very thin] (0, 0) grid (3,4);
        \fill[blue!40!white] (0,0) rectangle (1,1);
        \fill[blue!40!white] (0,1) rectangle (1,2);
        \fill[blue!40!white] (0,2) rectangle (1,3);
        \fill[blue!40!white] (0,3) rectangle (1,4);
        \fill[blue!40!white] (1,0) rectangle (2,1);
        \fill[blue!40!white] (1,2) rectangle (2,3);
        \fill[blue!40!white] (1,3) rectangle (2,4);
        \fill[blue!40!white] (2,0) rectangle (3,1);
        \fill[blue!40!white] (2,1) rectangle (3,2);
        \fill[blue!40!white] (2,2) rectangle (3,3);
        \draw[step=1cm,black,very thin] (0, 0) grid (3,4);
    \end{tikzpicture}
    &
    \begin{tikzpicture}[scale = 0.5]
        \draw[step=1cm,black,very thin] (0, 0) grid (3,4);
        \fill[blue!40!white] (0,0) rectangle (1,1);
        \fill[blue!40!white] (0,1) rectangle (1,2);
        \fill[blue!40!white] (0,2) rectangle (1,3);
        \fill[blue!40!white] (0,3) rectangle (1,4);
        \fill[blue!40!white] (1,0) rectangle (2,1);
        \fill[blue!40!white] (1,3) rectangle (2,4);
        \fill[blue!40!white] (2,0) rectangle (3,1);
        \fill[blue!40!white] (2,1) rectangle (3,2);
        \fill[blue!40!white] (2,2) rectangle (3,3);
        \fill[blue!40!white] (2,3) rectangle (3,4);
        \draw[step=1cm,black,very thin] (0, 0) grid (3,4);
    \end{tikzpicture}
    &
    \begin{tikzpicture}[scale = 0.5]
        \draw[step=1cm,black,very thin] (0, 0) grid (3,4);
        \fill[blue!40!white] (0,0) rectangle (1,1);
        \fill[blue!40!white] (0,1) rectangle (1,2);
        \fill[blue!40!white] (1,1) rectangle (2,2);
        \fill[blue!40!white] (1,2) rectangle (2,3);
        \fill[blue!40!white] (1,3) rectangle (2,4);
        \fill[blue!40!white] (2,0) rectangle (3,1);
        \fill[blue!40!white] (2,1) rectangle (3,2);
        \fill[blue!40!white] (2,2) rectangle (3,3);
        \fill[blue!40!white] (2,3) rectangle (3,4);
        \draw[step=1cm,black,very thin] (0, 0) grid (3,4);
    \end{tikzpicture}
    &
    \begin{tikzpicture}[scale = 0.5]
        \draw[step=1cm,black,very thin] (0, 0) grid (3,4);
        \fill[blue!40!white] (0,0) rectangle (1,1);
        \fill[blue!40!white] (0,1) rectangle (1,2);
        \fill[blue!40!white] (0,2) rectangle (1,3);
        \fill[blue!40!white] (0,3) rectangle (1,4);
        \fill[blue!40!white] (1,1) rectangle (2,2);
        \fill[blue!40!white] (1,2) rectangle (2,3);
        \fill[blue!40!white] (1,3) rectangle (2,4);
        \fill[blue!40!white] (2,0) rectangle (3,1);
        \fill[blue!40!white] (2,1) rectangle (3,2);
        \draw[step=1cm,black,very thin] (0, 0) grid (3,4);
    \end{tikzpicture}
    &
    \begin{tikzpicture}[scale = 0.5]
        \draw[step=1cm,black,very thin] (0, 0) grid (3,4);
        \fill[blue!40!white] (0,0) rectangle (1,1);
        \fill[blue!40!white] (1,0) rectangle (2,1);
        \fill[blue!40!white] (1,1) rectangle (2,2);
        \fill[blue!40!white] (1,2) rectangle (2,3);
        \fill[blue!40!white] (1,3) rectangle (2,4);
        \fill[blue!40!white] (2,0) rectangle (3,1);
        \fill[blue!40!white] (2,1) rectangle (3,2);
        \fill[blue!40!white] (2,2) rectangle (3,3);
        \fill[blue!40!white] (2,3) rectangle (3,4);
        \draw[step=1cm,black,very thin] (0, 0) grid (3,4);
    \end{tikzpicture} \\
(6) $45\,334\,080$ & (7) $29\,937\,600$ & (8) $9\,836\,640$ & (9) $9\,836\,640$ & (10) $9\,836\,640$ \\[6pt]
    \begin{tikzpicture}[scale = 0.5]
        \draw[step=1cm,black,very thin] (0, 0) grid (3,4);
        \fill[blue!40!white] (0,0) rectangle (1,1);
        \fill[blue!40!white] (0,1) rectangle (1,2);
        \fill[blue!40!white] (0,2) rectangle (1,3);
        \fill[blue!40!white] (0,3) rectangle (1,4);
        \fill[blue!40!white] (1,0) rectangle (2,1);
        \fill[blue!40!white] (1,1) rectangle (2,2);
        \fill[blue!40!white] (1,2) rectangle (2,3);
        \fill[blue!40!white] (1,3) rectangle (2,4);
        \fill[blue!40!white] (2,0) rectangle (3,1);
        \draw[step=1cm,black,very thin] (0, 0) grid (3,4);
    \end{tikzpicture}
    &
    \begin{tikzpicture}[scale = 0.5]
        \draw[step=1cm,black,very thin] (0, 0) grid (3,4);
        \fill[blue!40!white] (0,0) rectangle (1,1);
        \fill[blue!40!white] (0,1) rectangle (1,2);
        \fill[blue!40!white] (1,0) rectangle (2,1);
        \fill[blue!40!white] (1,1) rectangle (2,2);
        \fill[blue!40!white] (1,2) rectangle (2,3);
        \fill[blue!40!white] (1,3) rectangle (2,4);
        \fill[blue!40!white] (2,0) rectangle (3,1);
        \fill[blue!40!white] (2,2) rectangle (3,3);
        \fill[blue!40!white] (2,3) rectangle (3,4);
        \draw[step=1cm,black,very thin] (0, 0) grid (3,4);
    \end{tikzpicture}
    &
    \begin{tikzpicture}[scale = 0.5]
        \draw[step=1cm,black,very thin] (0, 0) grid (3,4);
        \fill[blue!40!white] (0,0) rectangle (1,1);
        \fill[blue!40!white] (0,2) rectangle (1,3);
        \fill[blue!40!white] (0,3) rectangle (1,4);
        \fill[blue!40!white] (1,0) rectangle (2,1);
        \fill[blue!40!white] (1,1) rectangle (2,2);
        \fill[blue!40!white] (1,2) rectangle (2,3);
        \fill[blue!40!white] (1,3) rectangle (2,4);
        \fill[blue!40!white] (2,0) rectangle (3,1);
        \fill[blue!40!white] (2,1) rectangle (3,2);
        \draw[step=1cm,black,very thin] (0, 0) grid (3,4);
    \end{tikzpicture}
    &
    \begin{tikzpicture}[scale = 0.5]
        \draw[step=1cm,black,very thin] (0, 0) grid (3,4);
        \fill[blue!40!white] (0,0) rectangle (1,1);
        \fill[blue!40!white] (0,1) rectangle (1,2);
        \fill[blue!40!white] (0,2) rectangle (1,3);
        \fill[blue!40!white] (1,1) rectangle (2,2);
        \fill[blue!40!white] (1,2) rectangle (2,3);
        \fill[blue!40!white] (1,3) rectangle (2,4);
        \fill[blue!40!white] (2,0) rectangle (3,1);
        \fill[blue!40!white] (2,1) rectangle (3,2);
        \fill[blue!40!white] (2,3) rectangle (3,4);
        \draw[step=1cm,black,very thin] (0, 0) grid (3,4);
    \end{tikzpicture}
    &
    \begin{tikzpicture}[scale = 0.5]
        \draw[step=1cm,black,very thin] (0, 0) grid (3,4);
        \fill[blue!40!white] (0,0) rectangle (1,1);
        \fill[blue!40!white] (0,1) rectangle (1,2);
        \fill[blue!40!white] (0,3) rectangle (1,4);
        \fill[blue!40!white] (1,1) rectangle (2,2);
        \fill[blue!40!white] (1,2) rectangle (2,3);
        \fill[blue!40!white] (1,3) rectangle (2,4);
        \fill[blue!40!white] (2,0) rectangle (3,1);
        \fill[blue!40!white] (2,1) rectangle (3,2);
        \fill[blue!40!white] (2,2) rectangle (3,3);
        \draw[step=1cm,black,very thin] (0, 0) grid (3,4);
    \end{tikzpicture} \\
(11) $9\,836\,640$ & (12) $9\,836\,640$ & (13) $9\,836\,640$ & (14) $9\,836\,640$ & (15) $9\,836\,640$ \\[6pt]
    \begin{tikzpicture}[scale = 0.5]
        \draw[step=1cm,black,very thin] (0, 0) grid (3,4);
        \fill[blue!40!white] (0,0) rectangle (1,1);
        \fill[blue!40!white] (0,2) rectangle (1,3);
        \fill[blue!40!white] (1,0) rectangle (2,1);
        \fill[blue!40!white] (1,1) rectangle (2,2);
        \fill[blue!40!white] (1,2) rectangle (2,3);
        \fill[blue!40!white] (1,3) rectangle (2,4);
        \fill[blue!40!white] (2,0) rectangle (3,1);
        \fill[blue!40!white] (2,1) rectangle (3,2);
        \fill[blue!40!white] (2,3) rectangle (3,4);
        \draw[step=1cm,black,very thin] (0, 0) grid (3,4);
    \end{tikzpicture}
    &
    \begin{tikzpicture}[scale = 0.5]
        \draw[step=1cm,black,very thin] (0, 0) grid (3,4);
        \fill[blue!40!white] (0,0) rectangle (1,1);
        \fill[blue!40!white] (0,1) rectangle (1,2);
        \fill[blue!40!white] (0,3) rectangle (1,4);
        \fill[blue!40!white] (1,0) rectangle (2,1);
        \fill[blue!40!white] (1,1) rectangle (2,2);
        \fill[blue!40!white] (1,2) rectangle (2,3);
        \fill[blue!40!white] (1,3) rectangle (2,4);
        \fill[blue!40!white] (2,0) rectangle (3,1);
        \fill[blue!40!white] (2,2) rectangle (3,3);
        \draw[step=1cm,black,very thin] (0, 0) grid (3,4);
    \end{tikzpicture}
    &
    \begin{tikzpicture}[scale = 0.5]
        \draw[step=1cm,black,very thin] (0, 0) grid (3,4);
        \fill[blue!40!white] (0,0) rectangle (1,1);
        \fill[blue!40!white] (0,1) rectangle (1,2);
        \fill[blue!40!white] (0,2) rectangle (1,3);
        \fill[blue!40!white] (1,0) rectangle (2,1);
        \fill[blue!40!white] (1,1) rectangle (2,2);
        \fill[blue!40!white] (1,2) rectangle (2,3);
        \fill[blue!40!white] (1,3) rectangle (2,4);
        \fill[blue!40!white] (2,0) rectangle (3,1);
        \fill[blue!40!white] (2,3) rectangle (3,4);
        \draw[step=1cm,black,very thin] (0, 0) grid (3,4);
    \end{tikzpicture}
    &
    \begin{tikzpicture}[scale = 0.5]
        \draw[step=1cm,black,very thin] (0, 0) grid (3,4);
        \fill[blue!40!white] (0,0) rectangle (1,1);
        \fill[blue!40!white] (0,3) rectangle (1,4);
        \fill[blue!40!white] (1,0) rectangle (2,1);
        \fill[blue!40!white] (1,1) rectangle (2,2);
        \fill[blue!40!white] (1,2) rectangle (2,3);
        \fill[blue!40!white] (1,3) rectangle (2,4);
        \fill[blue!40!white] (2,0) rectangle (3,1);
        \fill[blue!40!white] (2,1) rectangle (3,2);
        \fill[blue!40!white] (2,2) rectangle (3,3);
        \draw[step=1cm,black,very thin] (0, 0) grid (3,4);
    \end{tikzpicture}
    &
    \begin{tikzpicture}[scale = 0.5]
        \draw[step=1cm,black,very thin] (0, 0) grid (3,4);
        \fill[blue!40!white] (0,0) rectangle (1,1);
        \fill[blue!40!white] (0,1) rectangle (1,2);
        \fill[blue!40!white] (0,3) rectangle (1,4);
        \fill[blue!40!white] (1,1) rectangle (2,2);
        \fill[blue!40!white] (1,2) rectangle (2,3);
        \fill[blue!40!white] (2,0) rectangle (3,1);
        \fill[blue!40!white] (2,1) rectangle (3,2);
        \fill[blue!40!white] (2,2) rectangle (3,3);
        \fill[blue!40!white] (2,3) rectangle (3,4);
        \draw[step=1cm,black,very thin] (0, 0) grid (3,4);
    \end{tikzpicture} \\
(16) $9\,836\,640$ & (17) $9\,836\,640$ & (18) $9\,836\,640$ & (19) $9\,836\,640$ & (20) $9\,836\,640$ \\[6pt]
    \begin{tikzpicture}[scale = 0.5]
        \draw[step=1cm,black,very thin] (0, 0) grid (3,4);
        \fill[blue!40!white] (0,0) rectangle (1,1);
        \fill[blue!40!white] (0,1) rectangle (1,2);
        \fill[blue!40!white] (0,2) rectangle (1,3);
        \fill[blue!40!white] (0,3) rectangle (1,4);
        \fill[blue!40!white] (1,1) rectangle (2,2);
        \fill[blue!40!white] (1,2) rectangle (2,3);
        \fill[blue!40!white] (2,0) rectangle (3,1);
        \fill[blue!40!white] (2,1) rectangle (3,2);
        \fill[blue!40!white] (2,3) rectangle (3,4);
        \draw[step=1cm,black,very thin] (0, 0) grid (3,4);
    \end{tikzpicture}
    &
    \begin{tikzpicture}[scale = 0.5]
        \draw[step=1cm,black,very thin] (0, 0) grid (3,4);
        \fill[blue!40!white] (0,0) rectangle (1,1);
        \fill[blue!40!white] (0,3) rectangle (1,4);
        \fill[blue!40!white] (1,0) rectangle (2,1);
        \fill[blue!40!white] (1,1) rectangle (2,2);
        \fill[blue!40!white] (1,2) rectangle (2,3);
        \fill[blue!40!white] (2,0) rectangle (3,1);
        \fill[blue!40!white] (2,1) rectangle (3,2);
        \fill[blue!40!white] (2,2) rectangle (3,3);
        \fill[blue!40!white] (2,3) rectangle (3,4);
        \draw[step=1cm,black,very thin] (0, 0) grid (3,4);
    \end{tikzpicture}
    &
    \begin{tikzpicture}[scale = 0.5]
        \draw[step=1cm,black,very thin] (0, 0) grid (3,4);
        \fill[blue!40!white] (0,0) rectangle (1,1);
        \fill[blue!40!white] (0,1) rectangle (1,2);
        \fill[blue!40!white] (0,2) rectangle (1,3);
        \fill[blue!40!white] (0,3) rectangle (1,4);
        \fill[blue!40!white] (1,0) rectangle (2,1);
        \fill[blue!40!white] (1,1) rectangle (2,2);
        \fill[blue!40!white] (1,2) rectangle (2,3);
        \fill[blue!40!white] (2,0) rectangle (3,1);
        \fill[blue!40!white] (2,3) rectangle (3,4);
        \draw[step=1cm,black,very thin] (0, 0) grid (3,4);
    \end{tikzpicture}
    &
    \begin{tikzpicture}[scale = 0.5]
        \draw[step=1cm,black,very thin] (0, 0) grid (3,4);
        \fill[blue!40!white] (0,0) rectangle (1,1);
        \fill[blue!40!white] (0,1) rectangle (1,2);
        \fill[blue!40!white] (0,3) rectangle (1,4);
        \fill[blue!40!white] (1,0) rectangle (2,1);
        \fill[blue!40!white] (1,1) rectangle (2,2);
        \fill[blue!40!white] (1,2) rectangle (2,3);
        \fill[blue!40!white] (2,0) rectangle (3,1);
        \fill[blue!40!white] (2,2) rectangle (3,3);
        \fill[blue!40!white] (2,3) rectangle (3,4);
        \draw[step=1cm,black,very thin] (0, 0) grid (3,4);
    \end{tikzpicture}
    &
    \begin{tikzpicture}[scale = 0.5]
        \draw[step=1cm,black,very thin] (0, 0) grid (3,4);
        \fill[blue!40!white] (0,0) rectangle (1,1);
        \fill[blue!40!white] (0,2) rectangle (1,3);
        \fill[blue!40!white] (0,3) rectangle (1,4);
        \fill[blue!40!white] (1,0) rectangle (2,1);
        \fill[blue!40!white] (1,1) rectangle (2,2);
        \fill[blue!40!white] (1,2) rectangle (2,3);
        \fill[blue!40!white] (2,0) rectangle (3,1);
        \fill[blue!40!white] (2,1) rectangle (3,2);
        \fill[blue!40!white] (2,3) rectangle (3,4);
        \draw[step=1cm,black,very thin] (0, 0) grid (3,4);
    \end{tikzpicture} \\
(21) $9\,836\,640$ & (22) $9\,836\,640$ & (23) $9\,836\,640$ & (24) $9\,836\,640$ & (25) $9\,836\,640$
\end{tabular}
\caption{All the elements of $\calM_{4, 3}$ together with the number of permutations in $S_{[4]\times [3]}$ that are mapped to each by the function $G$.}\label{fig:M_43}
\end{figure}
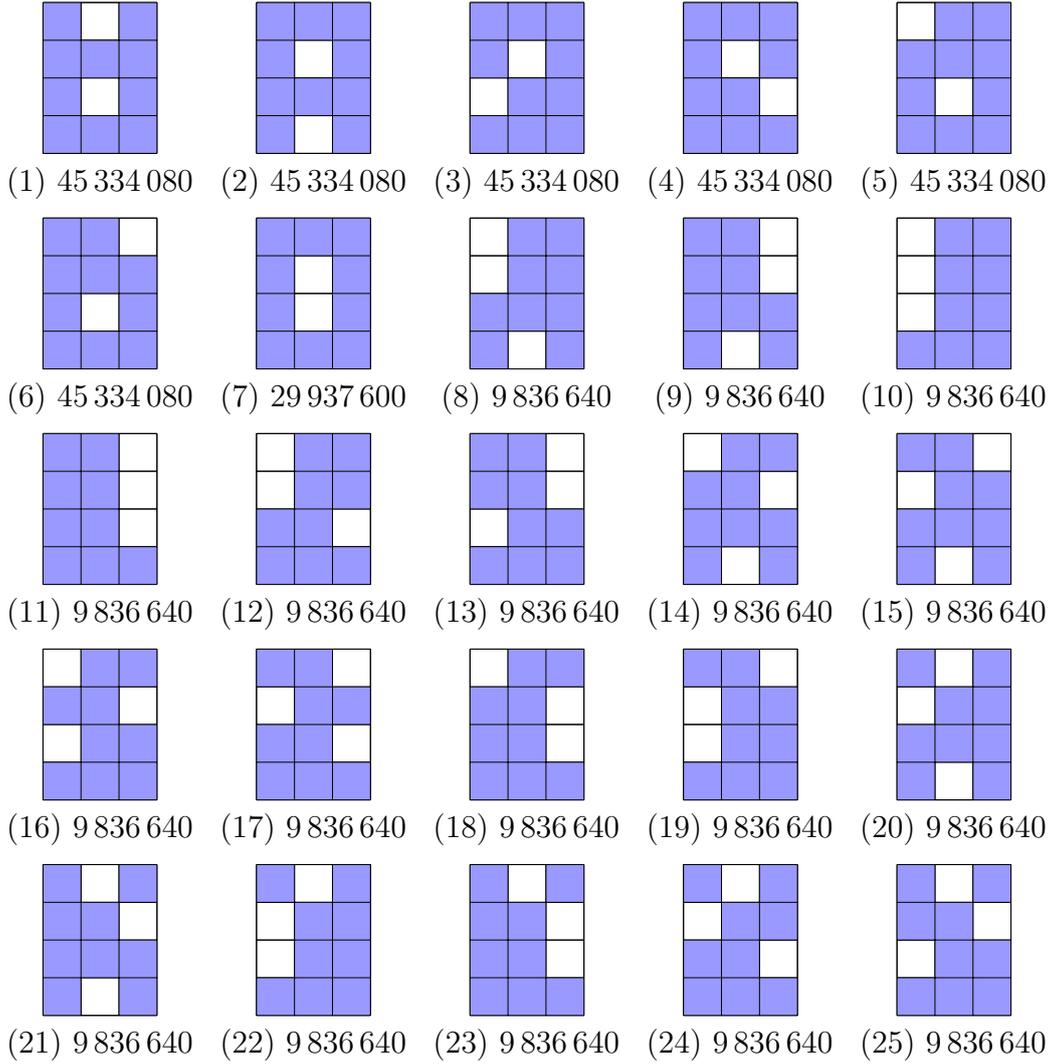

Using the same code in R, we found all the elements of sets $\calM_{3, 4}$ and $\calM_{4, 3}$, together with the number of permutations in $S_{[3]\times [4]}$ and $S_{[4]\times [3]}$, respectively, that are mapped to each of them (see Figures \ref{fig:M_34} and \ref{fig:M_43}). Inspecting the results, we see that for these dimensions of the tract of land, we also have
\begin{equation*}
    X_{3, 4}^u \overset{(d)}{\neq} X_{3, 4}^s \qquad \textnormal{and} \qquad X_{4, 3}^u \overset{(d)}{\neq} X_{4, 3}^s.
\end{equation*}
Furthermore, we can see that
\begin{equation*}
    X_{3, 4}^u \overset{(d)}{\neq} X_{4, 3}^u \qquad \textnormal{and} \qquad X_{3, 4}^s \overset{(d)}{\neq} X_{4, 3}^s.
\end{equation*}
Not only these variables do not have the same distribution, they do not even have the same support. Therefore, orientation of the tract of land is important. The reason for that is the difference in the role of north and south. Notice that east and west are symmetric since the houses can get sunlight from both of those sides, but north and south play very different roles since houses cannot get sunlight from the north.

In Figures \ref{fig:M_33}, \ref{fig:M_34} and \ref{fig:M_43} one can see how the set $\calM_{m, n}$ looks like for several different values of $m$ and $n$. More precisely, we showed that $\aps{\calM_{3, 3}} = 10$, $\aps{\calM_{3, 4}} = 19$ and $\aps{\calM_{4, 3}} = 25$. A natural question that arises is the following.

\begin{question}
    What is the cardinality of the set $\calM_{m, n}$ (for $m, n \in \bbN$)?
\end{question}

Using the terminology from \cite{PSZ-21}, maximal configurations (for a particular $m, n \in \bbN$) with the highest building density possible are called \emph{efficient}, while those with the lowest building density possible are called \emph{inefficient}. Another interesting thing that can be observed from the previous examples is that among all the maximal configurations, the ones that have the most permutations mapped to them are some of the efficient ones. Furthermore, maximal configurations that have the least permutations mapped to them are the inefficient ones. However, there are more maximal configurations that are not efficient than the efficient ones. Therefore, it can happen that more permutations (in total) are mapped to maximal configurations that do not have the highest building density possible than to the efficient ones. For bigger tracts of land, it becomes quite computationally demanding to evaluate the function $G$ for all permutations in $S_{[m]\times [n]}$, but we can sample some permutations from $S_{[m]\times [n]}$ uniformly at random and see how the function $G$ acts on them. We did this for a $5 \times 6$ tract of land. We sampled $100\,000$ random permutations and we acted with the function $G$ on them. The results can be seen in Figure \ref{fig:randperm}. From the box plot it is clear that again much more permutations were mapped to a particular efficient maximal configuration than to the maximal configurations that are not efficient. On the other hand, as it can be seen from the histogram, there are obviously much more maximal configurations where there are $23$ or even $22$ occupied lots (than those with $24$ occupied lots) which results in building densities $23/30$ and $22/30$ being more probable than the building density $24/30$. From \cite{PSZ-21} we know that the building density of the efficient maximal configurations on $5 \times 6$ tract of land is precisely $24/30$.

\begin{figure}
	\begin{subfigure}{0.49\textwidth}\centering
		\includegraphics[width=1\textwidth]{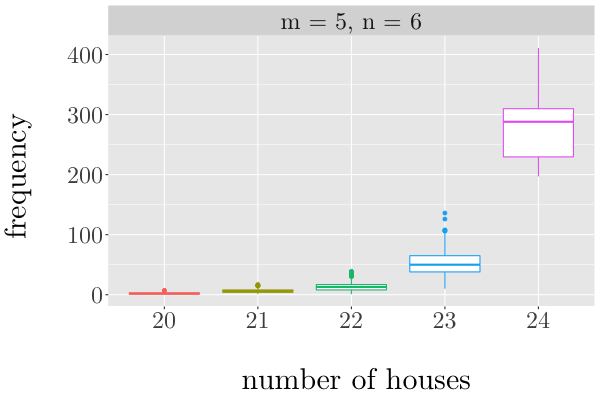}
	\end{subfigure}
	\begin{subfigure}{0.49\textwidth}\centering
		\includegraphics[width=1\textwidth]{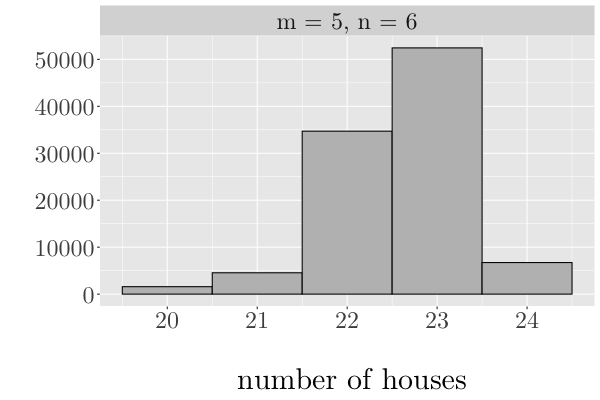}
	\end{subfigure}
	\caption{Comparison of the number of permutations (out of $100\,000$ sampled) that the function $G$ maps to a particular maximal configuration and the number of permutations that the function $G$ maps to any of the maximal configurations with a particular occupancy.}\label{fig:randperm}
\end{figure}

\begin{conjecture}\label{conj:eff-ineff+perm}
    Among all the maximal configurations, the ones that have the most permutations mapped to them (by the function $G$) are some of the efficient ones. Moreover, maximal configurations that have the fewest permutations mapped to them (by the function $G$) are some of the inefficient ones.
\end{conjecture}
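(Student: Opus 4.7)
Write $N(C):=|G^{-1}(C)|$ for the number of permutations mapping to the maximal configuration $C$. The plan is to first obtain a clean combinatorial description of $N(C)$ and then to compare these counts across configurations of different occupancies.

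First I would show that the restriction of a permutation $\sigma\in G^{-1}(C)$ to the set $C$ is unconstrained: since $C$ itself is permissible, adding any $s\in C$ to an arbitrary sub-configuration $D\subseteq C$ cannot create a blocking pattern (were $s$ to become blocked, or were it to block some $t\in D$, the full $C$ would already be impermissible). Consequently $\sigma\in G^{-1}(C)$ if and only if for every $s\notin C$ the following \emph{disjunctive blocking condition} holds: at the moment $s$ is processed, the already-placed subset of $C$ contains one of the (at most four) \emph{minimal blocking triples} for $s$, namely whichever of $\{s_E,s_W,s_S\}$, $\{s_N,s_{NE},s_{NW}\}$, $\{s_E,s_{EE},s_{ES}\}$, $\{s_W,s_{WW},s_{WS}\}$ actually lie in $C$. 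Maximality of $C$ guarantees that at least one such triple exists for each $s\notin C$. The count $N(C)$ then becomes the number of linear orderings of $[m]\times[n]$ satisfying an $|C^c|$-fold disjunction of ``after-the-triple'' constraints, and can be expressed by inclusion--exclusion as an alternating sum of linear-extension counts of partial orders on $[m]\times[n]$ determined entirely by $C$.

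With this representation the intuition behind the conjecture becomes transparent: enlarging $C$ (moving towards efficiency) equips every remaining empty lot with at least as many minimal blocking triples as before, and because the blocking condition is a disjunction, additional triples only loosen the constraints and hence only increase $N$; the opposite effect holds when shrinking $C$ towards inefficiency. To make this rigorous I would aim for a \emph{local modification lemma}: starting from any non-efficient maximal $C$, identify a local pattern inside $C$ (for instance a pair of consecutive rows whose joint occupancy is strictly below what is achievable) that can be replaced to yield a new maximal $C'$ with $|C'|>|C|$, and exhibit an injection $G^{-1}(C)\hookrightarrow G^{-1}(C')$. Iterating such a move one would reach an efficient configuration $C^{*}$ with $N(C^{*})\ge N(C)$; an analogous ``sparsification'' step handles the minimum-fibre statement about inefficient configurations.

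The hard part will be constructing the local injection. Replacing a patch in $C$ changes both the set $C^c$ and the collection of minimal blocking triples attached to each element of $C^c$, so there is no canonical identification between the fibres of $G$. In practice this likely forces one to enumerate the finite list of local patches that can appear in any $C\in\calM_{m,n}$ and, for each such pattern, build the injection by hand, typically by conditioning on the relative order in which the lots inside the modified patch are processed by $\sigma$ and re-shuffling them within the same positions of the permutation. Verifying that this surgery genuinely produces a \emph{maximal} $C'$, and that the injections glue together coherently under iteration, is in my opinion where the bulk of the work lies, which is presumably why the statement is left as a conjecture.
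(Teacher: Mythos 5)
The statement you are asked to prove is left as an open conjecture in the paper: the authors give no proof at all, only the informal discussion following the conjecture about why it is ``intuitive, but not trivial.'' Your proposal does not close that gap, and to your credit you essentially say so yourself. Its first half is sound and potentially useful: the characterization of the fibre size $N(C)=|G^{-1}(C)|$ as the number of orderings of $[m]\times[n]$ in which every $s\notin C$ is preceded by at least one of its minimal blocking triples contained in $C$, together with the observation that maximality guarantees each $s\notin C$ has at least one such triple, is correct. But the ``transparent intuition'' that enlarging $C$ only adds disjuncts and hence only increases $N$ cannot be applied as stated: two distinct maximal configurations are never nested (maximality forbids $C\subsetneq C'$), so passing from a non-efficient to an efficient configuration necessarily rearranges houses, which simultaneously changes the index set $C^c$ over which the constraints range and the triples attached to each of its elements. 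This is exactly the obstruction the paper itself flags when it notes that grouped empty lots impose fewer constraints, so that some efficient configurations receive fewer permutations than some maximal configurations with an extra empty lot.

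The two genuinely missing ingredients are: (i) a proof that every non-efficient maximal configuration admits a local modification to another \emph{maximal} configuration of strictly larger occupancy --- this ``local modification lemma'' is itself a nontrivial structural claim about maximal configurations and is nowhere established, either by you or in \cite{PSZ-21}; and (ii) the injection $G^{-1}(C)\hookrightarrow G^{-1}(C')$ attached to each such modification, which you explicitly leave unconstructed, and whose existence is far from clear given that the fibres are defined by incomparable systems of order constraints. Since neither the paper nor your proposal supplies these steps, the statement remains a conjecture; what you have written is a plausible attack plan with a correct combinatorial reformulation of $N(C)$, not a proof.
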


Notice that the claim of Conjecture \ref{conj:eff-ineff+perm} is intuitive, but not trivial. Namely, every empty lot in the maximal configuration implies some constraints on the permutation that resulted in this particular configuration. More precisely, some specific lots had to appear in the permutation before the lot that stayed empty. The more empty lots we want to obtain in the final configuration, the more constraints on a permutation we impose and this should result in fewer permutations mapping to maximal configurations with more empty lots. On the other hand, the situation is not completely trivial since in the case when the empty lots are grouped together, they imply fewer constraints on a permutation (compared to when they are further apart) and we can expect that the more empty lots we have, the more grouped they can be. That is why, in general, there will be some efficient configurations that have fewer permutations mapped to them than to some maximal configurations with one empty lot more, but notice that this is not in a contradiction with Conjecture \ref{conj:eff-ineff+perm}.

From now on, we focus only on the random variable $X_{m, n}^s$ as this is a more realistic model for the problem that we had in mind and we also do not need to know all the maximal configurations to be able to sample this random variable.

\section{Distribution of a random variable \texorpdfstring{$X_{m, n}^s$}{Xs{m, n}}}\label{sec:dist_X^s}

For small tracts of land, we managed to obtain the exact probability distributions of variables $X_{m, n}^u$ and $X_{m, n}^s$ by exhaustive search. However, this soon becomes infeasible due to the computational complexity. Hence, we have another interesting question that one could attack analytically.

\begin{question}
    What is the distribution of random variables $X_{m, n}^u$ and $X_{m, n}^s$ (for $m, n \in \bbN$)?
\end{question}

Since we were not able to solve this problem analytically, we turned to simulations. We obtained an approximate distribution of a random variable $X_{m, n}^s$ by sampling a big number of permutations from $S_{[m]\times [n]}$, acting with function $G$ on those permutations, and then counting the number of occupied lots in the resulting maximal configurations. The results can be seen in Figure \ref{fig:histograms-dist-X}.
\begin{figure}
	\begin{subfigure}{0.49\textwidth}\centering
		\includegraphics[width=1\textwidth]{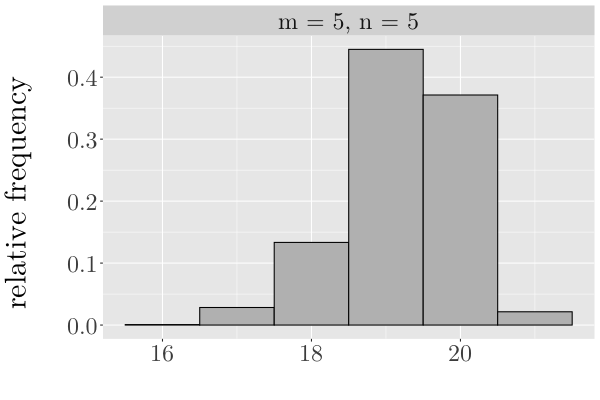}
	\end{subfigure}
	\begin{subfigure}{0.49\textwidth}\centering
		\includegraphics[width=1\textwidth]{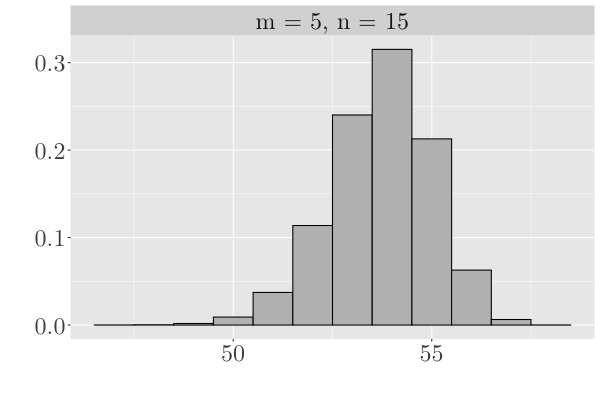}
	\end{subfigure}
	\begin{subfigure}{0.49\textwidth}\centering
		\includegraphics[width=1\textwidth]{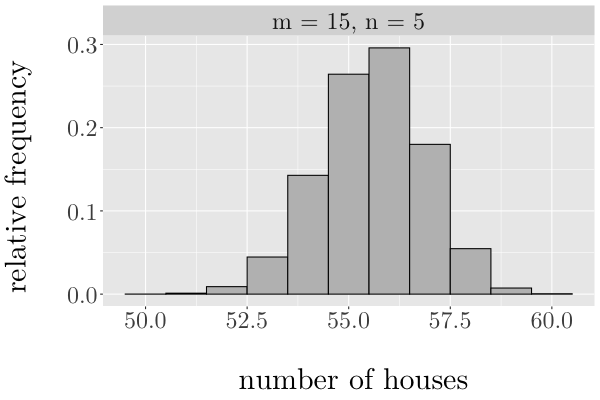}
	\end{subfigure}
	\begin{subfigure}{0.49\textwidth}\centering
		\includegraphics[width=1\textwidth]{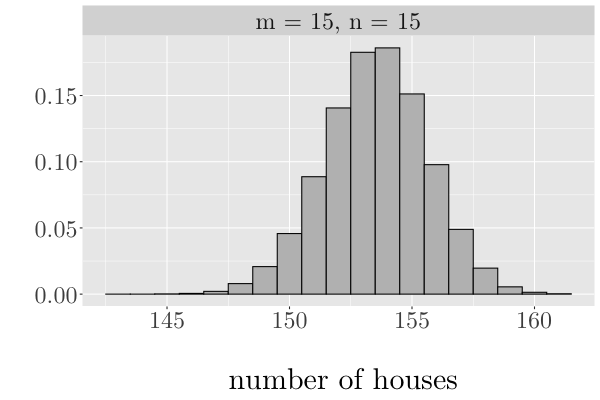}
	\end{subfigure}
	\caption{Histograms that approximate the distribution of the random variable $(mn) \cdot X_{m, n}^s$, i.e.\ the total number of houses (occupancy) in a maximal configuration.}\label{fig:histograms-dist-X}
\end{figure}
We simulated the distribution of random variables $X_{5, 5}^s$, $X_{5, 15}^s$, $X_{15, 5}^s$ and $X_{15, 15}^s$ by sampling $50\,000$ random permutations from $S_{[m]\times [n]}$ for each combination of parameters $m$ and $n$. There are several reasons for choosing this particular values of $m$ and $n$. The first reason is that for all the values of $m$ and $n$ smaller than $17$ the authors in \cite{PSZ-21} calculated precise values of the highest and the lowest building density so we already know what is the support of the random variables we simulated. The other reason is that in the next chapter we will be interested in what happens with the $\bbE[X_{m, n}^s]$ when at least one of the indices $m$ or $n$ goes to infinity. Box plots in Figure \ref{fig:box_plots-dist-X_mn} already suggest that increasing $m$ or $n$ causes expectation to drop and the size of the drop depends on whether we increase only $m$, only $n$ or both $m$ and $n$. This is exactly what we will show in the next section, with the aid of simulations.

\begin{figure}[ht]
	\centering
	\includegraphics[scale=0.58]{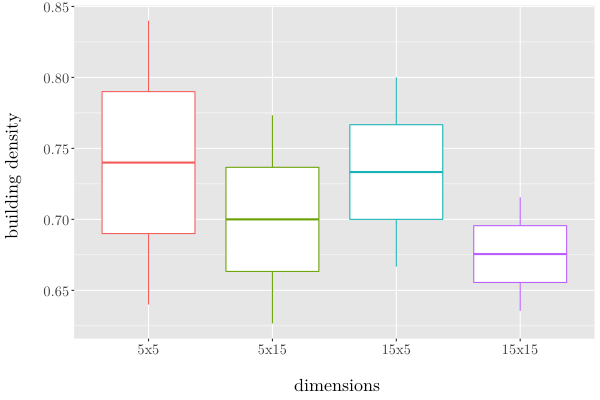}
	\caption{Comparison of distributions of random variables $X_{m, n}^s$ for different values of $m$ and $n$.} \label{fig:box_plots-dist-X_mn}
\end{figure}

\section{Packing density}\label{sec:packingDensity}

Sequential building of houses has a lot of similarities with the so-called random sequential adsorption (RSA). RSA refers to a process where particles are randomly introduced in a system and, if they do not overlap any previously adsorbed particle, they adsorb and remain fixed for the rest of the process. In our model, we assume that houses are built one-by-one. Once a house is built, we can say that it is adsorbed and that it remains fixed for the rest of the process. However, even though it will never happen that an occupied lot is chosen (hence, there are no overlaps like in a standard RSA model), it can happen that a house cannot be built on the chosen lot (i.e.\ it is not adsorbed) if that house would block some other house from the sunlight or if that house itself would be blocked from the sunlight. RSA can be carried out in computer simulations, in a mathematical analysis, or in experiments. The same holds for our model. Our main goal for the future work is to find a way how to mathematically analyze our model, but to get a better understanding of what to expect, we first carried out computer simulations of this model. The standard RSA was first studied by one-dimensional models: the attachment of pendant groups in a polymer chain by Paul Flory \cite{Flory}, and the car-parking problem by Alfr\'{e}d R\'{e}nyi \cite{Renyi} and Ewan Stafford Page \cite{Page} who studied a model analogous to the one studied by R\'{e}nyi, but on a discrete interval (so called \textit{discrete R\'{e}nyi packing problem} or \textit{unfriendly seating problem}). Other early works include those of Benjamin Widom \cite{Widom}. In two and higher dimensions many systems have been studied by computer simulations, including disks, randomly oriented squares and rectangles, aligned squares and rectangles, $k$-mers (particles occupying $k$ adjacent sites) and various other shapes.

An important result related to these models is the maximum surface coverage, called the saturation coverage, the packing fraction, or  the packing density; also known as the jamming limit in relation to parking problems. This is exactly the question that was in the center of our attention. More precisely, we were interested in values
\begin{equation}\label{eq:key_limits}
    \lim_{m \to \infty} \bbE[X_{m, n}^s] \,\,\,\, (\textnormal{for fixed } n), \quad \lim_{n \to \infty} \bbE[X_{m, n}^s] \,\,\,\, (\textnormal{for fixed } m), \quad \lim_{\substack{n \to \infty \\ m \to \infty}} \bbE[X_{m, n}^s].
\end{equation}
The precise constants in the limit are called the packing densities. There are some famous packing densities calculated explicitly and many more estimated using simulations. For the one-dimensional (continuous) car-parking problem, R\'{enyi} has shown in \cite{Renyi} that the jamming limit is equal to
\begin{equation*}
    \theta_1^c = \int_{0}^{\infty} \exp \OBL{-2 \int_0^x \frac{1 - e^{-y}}{y} dy} dx \approx 0.7475979,
\end{equation*}
the so-called R\'{e}nyi car-parking constant. For the discrete version of the R\'{e}nyi car-parking problem already Flory in \cite{Flory} observed that the jamming limit is
\begin{equation*}
    \theta_1^d = 1 - e^{-2} \approx 0.8646647,
\end{equation*}
and then Page (in \cite{Page}) proved that we even have convergence in probability and not only convergence of expectations.

\medskip

For saturation coverage of $k$-mers on one-dimensional lattice systems, see \cite{Krapivsky_et_al}. Analogous results on two-dimensional lattice systems can be found in \cite{Tarasevich_et_al} and \cite{Slutskii_et_al}. Saturation coverage of $k \times k$  squares on a two-dimensional square lattice is studied in \cite{Privman_et_al} and \cite{Brosilow_et_al}. The model which has some resemblance to our model is the one studied in \cite{Gan-Wang}, but this is still in the realm of the standard random sequential adsorption, unlike our model. For a systematic overview of all of the mentioned results and many more, see \cite{wiki-RSA}.

To get a better insight in the behaviour of $\bbE[X_{m, n}^s]$, when we let $m$ or $n$ to infinity, we ran several simulations (results of which are shown in Figures \ref{fig:jamming_limits_nx5_and_5xn} and \ref{fig:jamming_limit_nxn}). First we fixed $m = 5$ and varied $n$ from $10$ to $100$ with the step size $10$. For each combination of $m$ and $n$, we ran $2\,000$ simulations and calculated the mean building density and furthermore 5\textsuperscript{th} and 95\textsuperscript{th} percentile of the obtained building densities. Means are shown with the black dots that are connected with lines, and the area from 5\textsuperscript{th} to 95\textsuperscript{th} percentile is shaded gray. Then we switched the roles of $m$ and $n$ and repeated the same procedure. And finally, we ran the simulations where we varied both $m$ and $n$ from $10$ to $100$ with step size $10$. Based on the results of the simulations shown in Figures \ref{fig:jamming_limits_nx5_and_5xn} and \ref{fig:jamming_limit_nxn} we pose the following conjecture:

\begin{conjecture}\label{conj:E[X_mn]-decreasing}
    The double sequence $\{\bbE[X_{m, n}^s]\}_{m, n}$ is decreasing in $m$ and in $n$.
\end{conjecture}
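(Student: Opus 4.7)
Write $f(m,n) := \bbE[|C_{m,n}|]$, so that $\bbE[X_{m,n}^s] = f(m,n)/(mn)$ and the conjecture reads $(n+1)\,f(m,n)\geq n\,f(m,n+1)$ together with $(m+1)\,f(m,n)\geq m\,f(m+1,n)$. The natural plan is a \emph{column-deletion coupling} for the $n$-direction. Sample $\sigma\in S_{[m]\times[n+1]}$ uniformly and, for each $k\in\{1,\dots,n+1\}$, let $\sigma^{(k)}$ be the linear order induced by $\sigma$ on $[m]\times([n+1]\setminus\{k\})$, relabelled as $[m]\times[n]$. By symmetry each $\sigma^{(k)}$ is uniform on $S_{[m]\times[n]}$; hence $C^{(k)}:=G(\sigma^{(k)})$ has the law of $C_{m,n}$, and
\begin{equation*}
\sum_{k=1}^{n+1}\bbE\bigl[|C^{(k)}|\bigr] = (n+1)\,f(m,n).
\end{equation*}
Writing $C^\star := G(\sigma)$ and using $\sum_{k=1}^{n+1}|C^\star\setminus\text{col }k| = n\,|C^\star|$, the target reduces to the deterministic aggregate bound
\begin{equation}\label{eq:aggregate}
\sum_{k=1}^{n+1}|C^{(k)}(\sigma)|\geq n\,|C^\star(\sigma)|, \qquad \sigma\in S_{[m]\times[n+1]},
\end{equation}
which I would first attempt through the stronger per-column comparison $|C^{(k)}(\sigma)|\geq|C^\star(\sigma)\setminus\text{col }k|$. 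An entirely analogous row-deletion coupling would handle the $m$-direction, with an extra subtlety coming from the north--south asymmetry: removing the bottom row of the enlarged grid changes which houses are unblockable, so one must either treat that case separately or restrict the averaging to interior rows.

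\textbf{The main obstacle,} and the reason the result remains only a conjecture, is that deleting column $k$ forces columns $k-1$ and $k+1$ to become adjacent in the reduced grid, creating blocking interactions that are absent in the full-grid run (and destroying others). Consequently the naive inductive coupling on the number of processed sites is not monotone, and the per-column comparison cannot be read off directly from the dynamics. To bypass this, I would classify each site as \emph{insensitive} (identical fate in both runs) or \emph{sensitive} (fate changes), then bound the deficit from sensitive sites via a history argument, exploiting the fact that sensitive sites must lie in or near the newly-adjacent columns. If the per-column bound ultimately fails, one can fall back on the aggregate inequality \eqref{eq:aggregate} and hope for cancellation across different choices of $k$; this would require a double-averaging over the direction of deletion combined with a count of created versus destroyed blocking patterns. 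A parallel strategy, inspired by the RSA techniques referenced in Section~\ref{sec:packingDensity}, is to Poissonise the arrival times so that different columns decouple conditionally, enabling an inductive coupling column by column.
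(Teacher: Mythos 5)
This statement is posed in the paper only as a conjecture: the authors offer no proof, just numerical evidence (the box plots and Table~1), so there is no argument of theirs for you to have matched or diverged from. Your proposal, judged on its own terms, contains one genuinely correct and useful reduction but does not close the conjecture, and you say so yourself. The correct part is the column-deletion coupling: for $\sigma$ uniform on $S_{[m]\times[n+1]}$, each induced order $\sigma^{(k)}$ is indeed uniform on $S_{[m]\times[n]}$, each lot of $G(\sigma)$ is excluded from exactly one of the $n+1$ column-deleted counts, and so the inequality $n\,f(m,n+1)\le (n+1)f(m,n)$ does follow from the pathwise aggregate bound $\sum_{k=1}^{n+1}\bigl|C^{(k)}(\sigma)\bigr|\ge n\,\bigl|C^{\star}(\sigma)\bigr|$. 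That reduction is sound and is a reasonable first step toward the conjecture.

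The gap is that the aggregate bound itself is asserted, not proved, and the stronger per-column comparison $|C^{(k)}(\sigma)|\ge|C^{\star}(\sigma)\setminus\text{col }k|$ that you propose to attack first is almost certainly false in general: deleting column $k$ makes columns $k-1$ and $k+1$ adjacent, so a house that was built early in column $k+1$ can now block (or be blocked by) a house in column $k-1$, and the two runs of the greedy dynamics diverge from that moment on with no control on the sign of the discrepancy. Your proposed repair --- classifying sites as sensitive or insensitive and bounding the deficit ``via a history argument'' --- is a plan, not an argument; no bound on the number or contribution of sensitive sites is given, and the fallback of ``hoping for cancellation'' across the $n+1$ choices of $k$ is likewise unquantified. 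The row direction is in worse shape still, since the south-facing sunlight rule means row deletion changes which lots can ever be blocked, and you only flag this rather than resolve it. So the proposal should be read as a credible research programme for Conjecture~\ref{conj:E[X_mn]-decreasing}, consistent in spirit with the simulation evidence the paper presents, but the conjecture remains open after it.
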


\begin{figure}
	\begin{subfigure}{0.49\textwidth}\centering
		\includegraphics[width=1\textwidth]{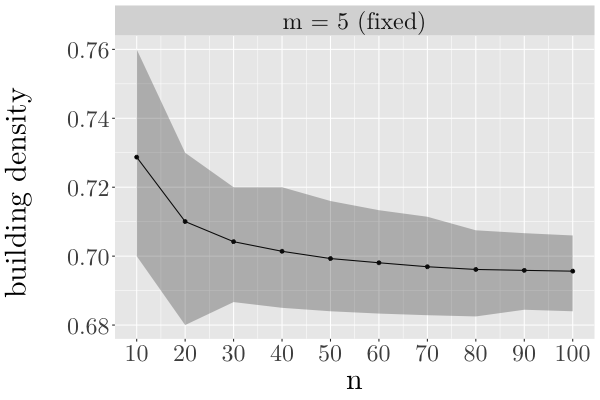}
	\end{subfigure}
	\begin{subfigure}{0.49\textwidth}\centering
		\includegraphics[width=1\textwidth]{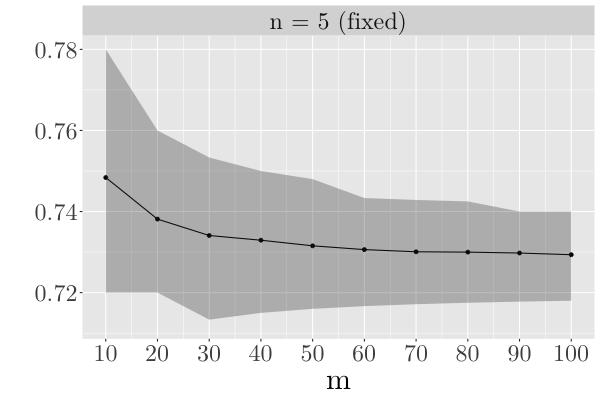}
	\end{subfigure}
	\caption{Behavior of $\bbE[X_{m, n}^s]$ when we let just one of the parameters, $m$ or $n$, to infinity.}\label{fig:jamming_limits_nx5_and_5xn}
\end{figure}

\begin{figure}
	\centering
	\includegraphics[scale=0.58]{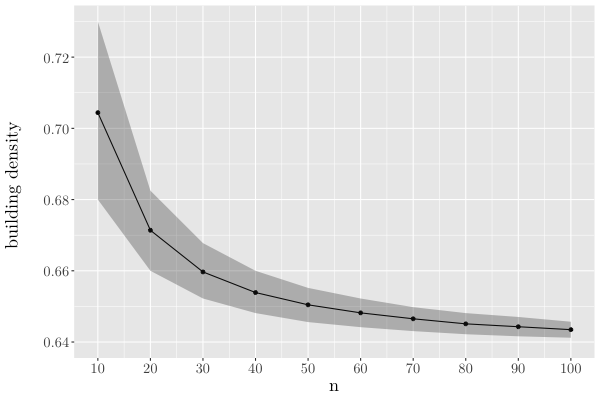}
	\caption{Behavior of $\bbE[X_{m, n}^s]$ when we let both of the parameters, $m$ and $n$, to infinity at the same rate.} \label{fig:jamming_limit_nxn}
\end{figure}

Since we know that this expectation is strictly positive, from Conjecture \ref{conj:E[X_mn]-decreasing} it would immediately follow that all the limits from \eqref{eq:key_limits} exist. We give Monte Carlo estimation of these limits in Table \ref{tab:jamming_limits}. For each combination of parameters $m$ and $n$, we ran $100$ simulations and then calculated the mean.

\begin{table}
\begin{tabular}{c|c|c|c|c|c|c|c|c}
    m $\setminus$ n & $5$ & $10$ & $50$ & $100$ & $500$ & $1\,000$ & $10\,000$ & $100\,000$ \\
    \hline
    $5$ & $0.769$ & $0.728$ & $0.699$ & $0.697$ & $0.693$ & $0.692$ & $0.692$ & $0.692$ \\
    \hline
    $10$ & $0.749$ & $0.703$ & $0.672$ & $0.668$ & $0.665$ & $0.665$ & $0.664$ & $0.664$ \\
    \hline
    $50$ & $0.730$ & $0.684$ & $0.651$ & $0.646$ & $0.643$ & $0.642$ & $0.642$ & $0.642$ \\
    \hline
    $100$ & $0.729$ & $0.682$ & $0.648$ & $0.643$ & $0.640$ & $0.640$ & $0.639$ & $0.639$ \\
    \hline
    $500$ & $0.728$ & $0.680$ & $0.646$ & $0.641$ & $0.638$ & $0.637$ & $0.637$ & $0.637$ \\
    \hline
    $1\,000$ & $0.728$ & $0.680$ & $0.645$ & $0.641$ & $0.638$ & $0.637$ & $0.637$ & $0.637$ \\
    \hline
    $10\,000$ & $0.727$ & $0.680$ & $0.645$ & $0.641$ & $0.637$ & $0.637$ & $0.636$ & $0.636$ \\
    \hline
    $100\,000$ & $0.727$ & $0.680$ & $0.645$ & $0.641$ & $0.637$ & $0.637$ & $0.636$ & 
\end{tabular}
\vspace{8px}
\caption{Packing densities}
\label{tab:jamming_limits}
\end{table}

Already for a $100\,000 \times 100\,000$ tract of land we did not have enough computational power to run the simulation. Regardless of that, the pattern is pretty clear. Values in each particular row and each particular column are decreasing and then stabilizing at some point. Even though the literature suggests that analytically obtaining the precise packing density in any of the studied cases (letting $m$ to infinity, letting $n$ to infinity or letting both $m$ and $n$ to infinity) is very involved, one can still ask the following question

\begin{question}\label{q:precise_packing_density}
	What are the exact values of the limits
	\begin{equation*}
		\lim_{m \to \infty}{\bbE[X_{m, n}^s]}, \qquad \lim_{n \to \infty}{\bbE[X_{m, n}^s]} \qquad \textnormal{and} \qquad \lim_{\substack{m \to \infty \\ n \to \infty}}{\bbE[X_{m, n}^s]}?
	\end{equation*}
\end{question}

On the other hand, there are many different techniques developed for approximating packing densities in the earlier mentioned models of random sequential adsorption so even if Question \ref{q:precise_packing_density} turns out to be too demanding, there is still a question of obtaining better approximations for packing density, using simulations designed in a better and more appropriate way or adopting some of the approaches from the RSA literature.

\section*{Acknowledgments} 

We wish to thank Juraj Bo\v{z}i\'{c} who introduced us to this problem that he came up with during his studies at Faculty of Architecture, University of Zagreb.

We additionally want to thank our colleagues Petar Baki\'{c}, Matija Ba\v{s}i\'{c} and Stipe Vidak thank to whom one particular instance of this problem ended up in the 10\textsuperscript{th} Middle European Mathematical Olympiad in V\"{o}klabruck, Austria (see \cite{MEMO}).

We also wish to thank Professor Tomislav Do\v{s}li\'{c} for fruitful and stimulating discussions.

\bibliographystyle{babamspl}
\bibliography{literature}

\end{document}